\def\abs#1{\left \vert #1 \right \vert}
\def\Frac#1#2{{\displaystyle{\frac{#1}{#2}}}}
\def\[#1\]{\begin{equation}#1\end{equation}}
\def\$#1\${\begin{eqnarray}#1\end{eqnarray}}
\def\phi{\varphi}
\def\mod#1{\;[\hbox{\rm mod}\; #1]}
\def\cC{{\cal C}}
\newtheorem{lem*}{Lemma}
\def\pn{\medskip\par\noindent}
\def\eps{\varepsilon}
\def\Lim{\mathop{\hbox{lim}}\limits}
\def\abs#1{\left \vert #1 \right \vert}
\def\Frac#1#2{{\displaystyle{\frac{#1}{#2}}}}
\def\[#1\]{\begin{equation}#1\end{equation}}
\def\$#1\${\begin{eqnarray}#1\end{eqnarray}}
\def\[#1\]{\begin{eqnarray} #1 \end{eqnarray}}
\def\phi{\varphi}
\def\RR{{\bf R}}
\def\bn{\begin{enumerate}}
\def\en{\end{enumerate}}
\def\ent#1#2{{\textstyle{{#1} \overwithdelims[] {#2}}}}
\def\pent#1#2{\pe{\frac{#1}{#2}}}
\def\pe#1{{\left \lbrack #1 \right \rbrack}} % partie entiere
\def\equi{\mathop{\displaystyle{\simeq}}\limits}
\def\tiret{--- }
\def\pn{\medskip\par\noindent}
\def\vect{\hbox{\rm vect }}
\def\eps{\varepsilon}
\def\NN{{\rm I\!N}} %natuerliche Zahlen
\def\RR{{\rm I\!R}} %reelle Zahlen
\def\RR{{\bf R}}
\begin{document}
\pagestyle{myheadings}
\markboth{P. -V. Koseleff, D. Pecker}{{\em A polynomial parametrization of torus knots}Submitted}
\title{A polynomial parametrization of torus knots}
\author{P. -V. Koseleff \and D. Pecker}
\institute{UPMC Paris 6, 4, place Jussieu, F-75252 Paris Cedex 05, \\
%\and P. -V. Koseleff \at Institut de Math{\'e}matiques de Jussieu,\\
\email{\tt\{koseleff,pecker\}@math.jussieu.fr}} 
\maketitle
\begin{abstract}
For every odd integer $N$ we give an explicit construction of a polynomial
curve $\cC(t) = (x(t), y (t))$, where $\deg x = 3$, 
$\deg y = N + 1 + 2\pent N4$ that has exactly $N$
crossing points 
$\cC(t_i)= \cC(s_i)$ whose parameters satisfy $s_1 < \cdots < s_{N} < t_1
< \cdots < t_{N}$. Our proof makes use of the theory of Stieltjes series and Pad{\'e} approximants.
This allows us an explicit polynomial parametrization of the torus knot $K_{2,N}$.
\end{abstract}
{\bf keywords:} {Polynomial curves, Stieltjes series, Pad{\'e} approximant, torus
  knots}    
\section{Introduction}
Let $N$ be an odd integer. 
We look for a parametrized curve $\cC(t)= (x(t),y(t))$ of minimal
lexicographic degree such that $\cC$ has exactly $N$ crossing points,
corresponding to parameters $(s_i, t_i)$ such that 
\begin{equation}\label{st}
\cC(s_i) = \cC(t_i), \ 
s_1< \cdots < s_N
< t_1 <\cdots <t_N.
\end{equation}
Here we look for curves with $\deg x=3$. 
As a consequence of B{\'e}zout
theorem, we have $\deg y \geq N+1$.
\begin{figure}[th]
\begin{center}
\psfrag{a1}{\hspace{-1.0cm}{\small $(s_1,t_1)$}}
\psfrag{a2}{\hspace{-.4cm}{\small$(s_2,t_2)$}}
\psfrag{a6}{{}}
\psfrag{a7}{\hspace{.4cm}{\small$(s_N,t_N)$}}
\centerline{\scalebox{.8}{\includegraphics{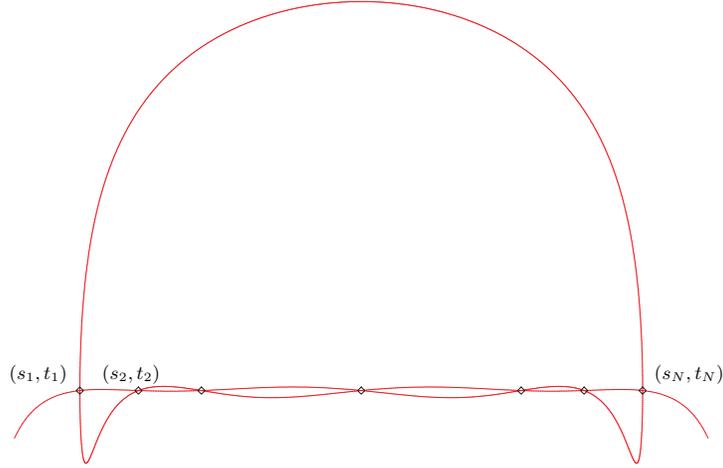}}}
\vspace*{8pt}
\caption{Curve of degree $(3, 19)$ in logarithmic scale in $y$}
\end{center}
\end{figure}
We have translated this problem into a problem on real roots of
certain real polynomials in one variable. 
In \cite{KP} we proved that if $N>3$, there is no solution with 
$\deg y = N+1$. We have computed the first examples and we have shown that
the minimal degrees are $\deg y = N + 1 + 2 \pent N4$ for $N=3, 5, 7$.  
\pn
The purpose of this paper is to give an explicit construction at any
order of such curves with $\deg y = N + 1 + 2 \pent N4$.
\pn
In section {\bf \ref{cheby}.}, we first recall some properties of the Chebyshev
polynomials. Our construction is based on certain relations in the space
spanned by some of these polynomials. 
\pn
The explicit construction is given in section {\bf \ref{pc}}. It involves
some particular polynomial basis whose existence is proved in section
{\bf \ref{pa}.}, using Stieltjes series theory 
and Pad{\'e} approximation theory (see \cite{BG}).
\pn
In section {\bf \ref{t2t6}.}, we will show that the algebraic relation
between $\cos 2\theta$ and $\cos 6\theta$ may be
seen as a Stieltjes series, namely some algebraic hypergeometric
function. We will recall some properties of these 
functions and their approximations by rational functions in section
{\bf \ref{pa}.}, the so-called Pad{\'e} approximants.  
\pn
The polynomial curves whose existence are proved are of interest for an 
explicit polynomial parametrizations of the $(2,N)$-type torus knot $K_{2,N}$ (see
\cite{Ad,KP,Mu,RS}). In section {\bf \ref{tk}.}, we give an explicit
parametrizations for the knots $K_{2,N}$.
They are symmetric with respect to the $y$-axis and of smaller degrees
than those already known.    
\section{Some properties of the Chebyshev polynomials}\label{cheby}
\begin{definition}[Monic Chebyshev polynomials]\label{tn}\\
If $t= 2 \cos \theta$, let $T_n(t)= 2 \cos ( n \theta )$ and
$V_n (t) = \Frac{\sin ((n+1) \theta)}{\sin \theta }$.
\end{definition}
$T_n$ and $V_n$ are both monic and have degree
$n$. It is convenient for our problem to consider them as
basis of $\RR[t]$. 
\pn
Looking for a polynomial curve $\cC(t)= (x(t),y(t))$ where $\deg x= 3$,
one can suppose that 
$$
x(t) = T_3(t), \ 
y(t) = T_m(t) + a_{m-1}T_{m-1}(t) + \cdots + a_1 T_1(t).
$$
In \cite{KP} (Lemma {\bf A}) we have shown that 
if $s \not = t$ are real numbers such that  $T_3(s)= T_3(t),$ then 
for any integer $k$ we have
\begin{equation}\label{tv}
\Frac{T_k (t) - T_k(s)}{t - s} =\Frac{2}{\sqrt 3} \sin \Frac{k
\pi}3 \, V_{k-1}(s+t) = \eps_k V_{k-1}(s+t).
\end{equation}
We proved the following:
\begin{proposition}\label{curve1}
Let $\eps_k = \Frac 2{\sqrt 3} \sin \Frac{k\pi}3 = V_{k-1}(1)$ and   
\begin{equation}\label{R}
R_m = \eps_m V_{m-1} + \eps_{m-1} a_{m-1} V_{m-2} + \cdots + \eps_1
a_1 V_0. 
\end{equation}
%If $R_m$ has $n$ real roots 
%$u_1 = 2 \cos \alpha_1 < \cdots < u_N = 2 \cos \alpha_N$, then 
%$$\cC(t) = (T_3(t), \ T_m(t) + a_{m-1}T_{m-1}(t) + \cdots + a_1 T_1(t))$$
%has $N$ crossing points corresponding to parameters 
%$$s_i = 2 \cos(\alpha_i + \pi/3), \, t_i = 2 \cos(\alpha_i - \pi/3).$$
\tiret
If $R_m$ has exactly $N$ distinct roots $-1<u_1<\cdots<u_N<1$
and no other in $[-2,2]$, then 
$$\cC(t) = (T_3(t), \ T_m(t) + a_{m-1}T_{m-1}(t) + \cdots + a_1 T_1(t))$$
has exactly $N$ crossing points. \\
\tiret
Let $u_i = 2 \cos \alpha_i$, then 
\begin{equation}\label{sta}
s_i = 2 \cos(\alpha_i + \pi/3), \, t_i = 2 \cos(\alpha_i - \pi/3)
\end{equation}
are the parameters of the crossing points and satisfy
$$s_1 < \cdots < s_N<t_1<\cdots<t_N.$$ 
\end{proposition}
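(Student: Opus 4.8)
The plan is to reduce the self-intersection condition to a single equation in $w=s+t$ and then to count its solutions. Writing $a_m=1$, so that $y=\Sum_{k=1}^m a_kT_k$, relation (\ref{tv}) gives, for all reals $s\ne t$ with $T_3(s)=T_3(t)$, that $(y(t)-y(s))/(t-s)=\Sum_{k=1}^m a_k\eps_k V_{k-1}(s+t)=R_m(s+t)$. Hence a pair $s\ne t$ yields a crossing point $\cC(s)=\cC(t)$ \emph{if and only if} $T_3(s)=T_3(t)$ and $R_m(s+t)=0$.

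Next I would describe all pairs $s\ne t$ with $T_3(s)=T_3(t)=:c$. Since $T_3(X)-c=X^3-3X-c$ then has two distinct real roots, its discriminant $27(4-c^2)$ is nonnegative, so $c\in[-2,2]$ and all its roots lie in $[-2,2]$; thus one may write $s=2\cos a$ and $t=2\cos b$ with $a\ne b$ in $[0,\pi]$. The equation $\cos 3a=\cos 3b$ then forces at least one of (I) $|a-b|=2\pi/3$, (II) $a+b=2\pi/3$, (III) $a+b=4\pi/3$. Using $2\cos A+2\cos B=4\cos\frac{A+B}{2}\cos\frac{A-B}{2}$ one computes $w=s+t=2\cos(\beta+\pi/3)\in[-1,1]$ in case (I), where $\beta=\min(a,b)\in[0,\pi/3]$; $w=2\cos\frac{a-b}{2}\in[1,2]$ in case (II); and $w=-2\cos\frac{a-b}{2}\in[-2,-1]$ in case (III). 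In particular $w=s+t$ always lies in $[-2,2]$, and any crossing point with $s+t\in(-1,1)$ is necessarily of type (I).

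Now the hypothesis on $R_m$ finishes the count. Because $R_m$ has no zero in $[-2,-1]\cup[1,2]$, no pair of type (II) or (III) is a crossing point, so every crossing point is of type (I) with $s+t$ a zero of $R_m$ lying in $(-1,1)$, that is $s+t=u_i$ for some $i$. Conversely $\theta\mapsto 2\cos\theta$ is a decreasing bijection of $[\pi/3,2\pi/3]$ onto $[-1,1]$, so each $u_i\in(-1,1)$ determines a unique $\beta_i\in(0,\pi/3)$ with $2\cos(\beta_i+\pi/3)=u_i$, hence a unique pair $\{2\cos\beta_i,\,2\cos(\beta_i+2\pi/3)\}$ of type (I); its two entries differ and $R_m(u_i)=0$, so it is a genuine crossing point, and (no type-(II) or (III) pair having sum in $(-1,1)$) it is the only crossing pair with sum $u_i$. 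Therefore $\cC$ has exactly $N$ crossing points. Putting $\alpha_i=\beta_i+\pi/3\in(\pi/3,2\pi/3)$ gives $u_i=2\cos\alpha_i$ and rewrites the pair as $\{2\cos(\alpha_i+\pi/3),\,2\cos(\alpha_i-\pi/3)\}$, which is exactly (\ref{sta}). Finally $u_1<\cdots<u_N$ forces $\alpha_1>\cdots>\alpha_N$; as $\cos$ decreases on $[0,\pi]$, evaluating on $(2\pi/3,\pi)$ yields $s_1<\cdots<s_N$ (all in $(-2,-1)$) and on $(0,\pi/3)$ yields $t_1<\cdots<t_N$ (all in $(1,2)$), and since every $s_i<-1<1<t_j$ we conclude $s_1<\cdots<s_N<t_1<\cdots<t_N$.

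The point needing the most care is the exhaustiveness of the trichotomy (I)--(III) together with the behaviour at the endpoints $w=\pm1$: the pairs $\{2,-1\}$ and $\{1,-2\}$ do satisfy $T_3(s)=T_3(t)$ with $s\ne t$, but their sums $\pm1$ are excluded from the zero set of $R_m$ by hypothesis, so they are correctly discarded. Everything else is a routine trigonometric computation.
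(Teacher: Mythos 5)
Your proof is correct and follows essentially the route the paper intends: the paper itself defers the proof of Proposition~\ref{curve1} to \cite{KP}, but the ingredients it quotes --- the identity (\ref{tv}) reducing $y(t)=y(s)$ to $R_m(s+t)=0$, and the parametrization $s=2\cos(\alpha+\pi/3)$, $t=2\cos(\alpha-\pi/3)$, $s+t=2\cos\alpha$ of pairs with $T_3(s)=T_3(t)$ --- are exactly the two steps you carry out. Your explicit trichotomy (I)--(III) and the endpoint discussion at $w=\pm1$ are a careful spelling-out of why the hypothesis ``no other roots in $[-2,2]$'' is needed, and the ordering argument at the end is the standard monotonicity computation.
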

%\subsection{Adapted polynomials}
We look for polynomials $R_m$ in $\RR[t]$ having $N$ roots that are linear
combinations of the $V_k$, where $k$ is not equal to $2 \mod 3$. 
We will consider separately $E \subset \RR[t]$ spanned by  $V_{6k+1}$ and
$V_{6k+3}$ and $\tilde E$ spanned by the $V_{6k}$ and $V_{6k+4}$. We
first describe these vectorial spaces as direct sums:
\begin{lemma}\label{EE}
$
E = T_1 \cdot \RR[T_6] \oplus T_1\cdot T_2 \cdot \RR[T_6], \quad 
\tilde E = 1 \oplus T_3 \cdot E.
$
\end{lemma}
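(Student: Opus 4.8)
\medskip\noindent\emph{Proof proposal.}
The plan is to reduce both identities to a single product rule for monic Chebyshev polynomials, namely $T_aV_b=V_{a+b}+V_{b-a}$, which follows from $2\cos(a\theta)\sin((b{+}1)\theta)=\sin((a{+}b{+}1)\theta)+\sin((b{-}a{+}1)\theta)$ once one extends $V$ to negative indices by $V_{-n}:=-V_{n-2}$ for $n\ge1$ (so $V_{-1}=0$). I would use the elementary special cases $V_0=1$, $V_1=T_1$, $V_3=T_1T_2=T_3+T_1$, together with $V_{-2}=-1$, $V_{-3}=-T_1$, $V_{-5}=-T_1T_2$. Specialising the product rule gives the recurrences $V_{b+6}=T_6V_b-V_{b-6}$ and $V_{b+3}=T_3V_b-V_{b-3}$ that will drive the inductions, and the obvious module facts $T_6\cdot\RR[T_6]\subseteq\RR[T_6]$ and $T_6F\subseteq F$ below.

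For the first identity, write $F:=T_1\RR[T_6]+T_1T_2\RR[T_6]$. First I would note that a nonzero element of $T_1\RR[T_6]$ has degree congruent to $1$ modulo $6$, while a nonzero element of $T_1T_2\RR[T_6]$ has degree congruent to $3$ modulo $6$; hence the two summands meet only in $0$, the sum is direct, and $F$ has basis $\{T_1T_6^{\,j}\}_{j\ge0}\cup\{T_1T_2T_6^{\,j}\}_{j\ge0}$ consisting of polynomials of pairwise distinct degrees, exactly one in each degree of $\{6k+1\}\cup\{6k+3\}$. The space $E$ has basis $\{V_{6k+1}\}\cup\{V_{6k+3}\}$, again one monic polynomial in each of those degrees, so $E$ and $F$ have the same dimension in every degree; consequently it suffices to prove $E\subseteq F$. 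For that I would induct on the index: $V_1=T_1$ and $V_3=T_1T_2$ lie in $F$; $V_7=T_6V_1+V_3$ and $V_9=T_6V_3+V_1$ lie in $F$ because $T_6F\subseteq F$ (here $V_{-5}=-V_3$, $V_{-3}=-V_1$); and for $k\ge1$ the recurrences $V_{6k+7}=T_6V_{6k+1}-V_{6(k-1)+1}$ and $V_{6k+9}=T_6V_{6k+3}-V_{6(k-1)+3}$ carry membership in $F$ up to every $V_{6k+1}$ and $V_{6k+3}$. This yields $E=F=T_1\RR[T_6]\oplus T_1T_2\RR[T_6]$.

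For the second identity I would apply multiplication by $T_3$, an injection of $\RR[t]$, to the direct sum just obtained, getting $T_3E=T_3T_1\RR[T_6]\oplus T_3T_1T_2\RR[T_6]$ with basis of pairwise distinct degrees filling $\{6k+4\}\cup\{6k+6\}$. Since every nonzero element of $E$ has degree $\ge1$, every nonzero element of $T_3E$ has degree $\ge4$, so $\RR\cap T_3E=\{0\}$ and $\RR\oplus T_3E$ is direct, with exactly one basis vector in each degree of $\{0\}\cup\{6k+4\}\cup\{6k+6\}=\{6k\}\cup\{6k+4\}$. As $\tilde E$ has basis $\{V_{6k}\}\cup\{V_{6k+4}\}$ with the same degree set, it again suffices to show $\tilde E\subseteq\RR\oplus T_3E$. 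The induction parallels the previous one: $V_0=1$, $V_4=T_3V_1+1$, $V_6=T_3V_3-1$ lie in $\RR\oplus T_3E$ (using $V_1,V_3\in E$), and the recurrences $V_{6k}=T_3V_{6k-3}-V_{6k-6}$ and $V_{6k+4}=T_3V_{6k+1}-V_{6k-2}$, together with $T_3V_{6k-3},T_3V_{6k+1}\in T_3E$ (since $V_{6k-3},V_{6k+1}\in E$), push membership up to all $V_{6k}$ and $V_{6k+4}$. Hence $\tilde E=\RR\oplus T_3E=1\oplus T_3\cdot E$.

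The hard part is not the algebra but the bookkeeping. I would fix the convention $V_{-n}=-V_{n-2}$ at the outset so that $T_aV_b=V_{a+b}+V_{b-a}$ holds with no case split — this is where an off-by-one sign error would propagate through both inductions — and I would carefully check that each of $E$, $F$, $\tilde E$ and $T_3E$ is a graded subspace of $\RR[t]$ with exactly one basis polynomial of each admissible degree. This ``equal Hilbert function'' observation is precisely what lets a single inclusion be promoted to an equality, so the inductions only have to go one way; everything else is routine verification of short identities.
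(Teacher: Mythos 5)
Your proof is correct and follows essentially the same route as the paper: the product identity $T_aV_b=V_{a+b}+V_{b-a}$ is the paper's $V_{n+p}-V_{n-p}=V_{p-1}T_{n+1}$ in disguise, and the base cases $V_1=T_1$, $V_3=T_1T_2$, $V_0=1$, $V_{-2}=-1$ and the two inductions are the same. The only difference is that you make explicit the degree-counting argument (one basis element per admissible degree) that the paper leaves implicit when promoting the inclusion to an equality of direct sums.
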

\begin{proof}
\tiret
From $\sin(x+y)-\sin(x-y) = 2 \cos(x) \sin(y)$
we deduce that for every integers $n$ and $p$, we have 
$$
V_{n+p} - V_{n-p} = V_{p-1} T_{n+1} .
$$
We thus deduce that $V_1 =T_1=t$, $V_3 = T_1 \cdot T_2$ and 
$$%\label{wn}
V_{6k+1} - V_{6k-3} = T_1 \cdot T_{6k} , \
V_{6k+3}-V_{6k-5} = V_3 \cdot T_{6k} = T_1 \cdot T_2 \cdot T_{6k}.
$$
From $T_{6k} = T_{k}(T_6)$, we
deduce by induction that 
$$E = T_1 \cdot \RR[T_6] \oplus T_1 \cdot T_2\cdot \RR[T_6].$$ 
\tiret From $\sin(x+y)+\sin(x-y)=
2\cos(y)\sin(x)$, we get 
$$
V_{n+3}+V_{n-3} = T_3 V_{n}
$$
so
$$%\label{twn}
V_{6k+6}+V_{6k} = T_3 V_{6k+3}, \, 
V_{6k+4}+V_{6k-2} = T_3 V_{6k+1}.
$$
As $V_0 = 1$ and $V_{-2}=-1$, we thus deduce by induction that $\tilde E = 1 \oplus T_3 \cdot E$.
\qed%
\end{proof}
\begin{definition}
Let us define for $k \geq 0$,  
\begin{eqnarray*}
\tilde W_{2k} = V_{6k},\, 
W_{2k} &=& V_{6k+1},\, W_{2k+1} = V_{6k+3}, \,
\tilde W_{2k+1} = V_{6k+4}.
\end{eqnarray*}
\end{definition}
We have $\deg W_n =2n+2\pent n2+1$ and 
$E = \vect(W_k, \, k\geq 0)$.\\
We have $\deg \tilde W_n =2n+2\pent{n+1}2$ and 
$\tilde E = \vect(\tilde W_k, \ k\geq 0)$.
\pn
Using the Pad{\'e} approximation theory, we will prove in section {\bf
  \ref{pa}} (p. \pageref{cn}) 
\begin{theorem}\label{2n}
There exists a sequence of odd polynomials $C_n$ in $E$ such that
\begin{eqnarray*}
\vect (W_0, \ldots, W_n) =\vect (C_0, \ldots, C_n),\, 
C_n = t^{2n+1}F_n, \, F_n(0)=1.  
\end{eqnarray*}
Furthermore $F_n(t)>0$ when $t \in [-2,2]$. 
\end{theorem}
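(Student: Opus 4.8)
The plan is to build the polynomials $C_n$ recursively, exploiting the description $E = T_1\cdot\RR[T_6]\oplus T_1 T_2\cdot\RR[T_6]$ from Lemma~\ref{EE} together with the Stieltjes/Padé machinery announced in Section~\ref{pa}. The first step is to reinterpret the flag $\vect(W_0,\dots,W_n)$ inside $E$ in terms of the variable $T_6$: writing $u = T_6$, the even-indexed generators $W_{2k}=V_{6k+1}$ live in $T_1\cdot\RR[u]$ and the odd-indexed ones $W_{2k+1}=V_{6k+3}$ live in $T_1 T_2\cdot\RR[u]$, so $\vect(W_0,\dots,W_n)$ is, up to the factors $T_1$ and $T_1 T_2$, a pair of truncated polynomial spaces in $u$. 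A polynomial $C_n = t^{2n+1}F_n$ with $F_n(0)=1$ is exactly the condition that $C_n$ has a zero of maximal order $2n+1$ at $t=0$; since $T_1(0)=0$, $T_2(0)=-2$, and $T_6(0)=2$, the multiplicity of the root $t=0$ is controlled by the order of vanishing at $u=2$ of the $\RR[u]$-coefficient polynomials. So the existence of $C_n$ is equivalent to the existence, in the span of $\{1, u, \dots, u^{\lfloor n/2\rfloor}\}$ (resp. the shifted span for the odd part), of a polynomial vanishing to prescribed order at $u=2$ — and that is precisely a Padé-type interpolation condition.

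The second step is to identify the right Stieltjes series. As promised in Section~\ref{t2t6}, the algebraic relation between $\cos 2\theta = \tfrac12 T_1(u')$-type quantities and $\cos 6\theta$ can be packaged as an algebraic hypergeometric function $f(z) = \sum c_k z^k$ which is a Stieltjes series, i.e. $c_k = \int_0^{1/R} x^k\,d\mu(x)$ for a positive measure $\mu$. The diagonal (or near-diagonal) Padé approximants $[n/n]_f = P_n/Q_n$ of a Stieltjes series have denominators $Q_n$ that are orthogonal polynomials with respect to $\mu$; hence their roots are real, simple, and lie in the support of $\mu$, and moreover $Q_n$ has constant sign outside a controlled interval. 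I would set up the correspondence so that $C_n$ corresponds to $Q_n$ (or to the numerator, depending on parity) under the change of variable relating $z$ to $t$; then $F_n(t) = C_n(t)/t^{2n+1}$ becomes, after the substitution, exactly such a $Q_n$ evaluated on $[-2,2]$, where by the Stieltjes positivity it does not vanish. The normalization $F_n(0)=1$ is arranged by rescaling $Q_n$, which is harmless.

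The third step is to verify that this construction actually produces a \emph{flag}: $\vect(C_0,\dots,C_n) = \vect(W_0,\dots,W_n)$ for every $n$. This follows from the standard recursive structure of Padé denominators — the three-term recurrence for orthogonal polynomials shows that $Q_n$ is a polynomial of exact degree $n$ in $z$, so $\{Q_0,\dots,Q_n\}$ spans the same space as $\{1,z,\dots,z^n\}$, which under the dictionary above is $\vect(W_0,\dots,W_n)$. Oddness of $C_n$ as a polynomial in $t$ is automatic: every generator $W_k = V_{6k+1}$ or $V_{6k+3}$ is an odd polynomial in $t$ (it sits in $T_1\cdot\RR[T_6]$ or $T_1 T_2\cdot\RR[T_6]$, and $T_1$ is odd while $T_2, T_6$ are even), so the whole space $E$ consists of odd polynomials and any basis of a subspace does too.

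The main obstacle I anticipate is the second step: making the identification of the relevant algebraic function as a \emph{bona fide} Stieltjes series precise, i.e. exhibiting the positive measure $\mu$ explicitly (or at least proving it is positive) and checking that the support of $\mu$ maps, under the change of variables $z \leftrightarrow u \leftrightarrow t$, into a region disjoint from the image of $[-2,2]$ — this is what ultimately yields $F_n > 0$ on $[-2,2]$ rather than merely $F_n \neq 0$. Getting the degree bookkeeping to match $\deg C_n = 2n+1 + \deg F_n$ with the stated $\deg W_n = 2n + 2\lfloor n/2\rfloor + 1$ is a routine but careful check once the dictionary between $t$, $u=T_6$, and the Padé variable $z$ is fixed; I would do that bookkeeping last, as a consistency verification.
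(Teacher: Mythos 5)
Your high-level strategy --- expand $T_2$ as a power series in $T_6+2$, show that the resulting algebraic function is a Stieltjes series, and extract $C_n$ from its Pad\'e approximants --- is indeed the route the paper takes. But the central identification in your second step is wrong, and it is not a repairable detail: you propose that $C_n$ (equivalently $F_n$) should correspond, after the change of variables, to the Pad\'e \emph{denominator} $Q_m$, so that positivity on $[-2,2]$ would follow from the real-rootedness and location of the zeros of orthogonal polynomials. In the actual construction $C_n$ is $t$ times the \emph{linearized Pad\'e error}: with $v=T_2+2=t^2$ and $u=(T_6+2)/4$ (note $T_6(0)=-2$, not $2$, and $4u=v(v-3)^2$), one sets $C_{k,l}=vQ_l(u)-P_k(u)=v^{k+l+1}F_{k,l}(v)$ and $C_n=t\,C_{k,l}$. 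Thus $F_n$ is the cofactor of the error term, not a denominator, and its positivity comes from a different part of the Stieltjes--Pad\'e theory: the coefficientwise bounds $0\le f_k^{[n/m]}\le f_k$ for $k> n+m$, with strict inequality at order $n+m+1$, which give $\phi(u)-\phi^{[n/m]}(u)=u^{n+m+1}\psi(u)$ with $\psi>0$ on $[0,1]$. The zero location of $Q_m$ (all poles beyond the radius of convergence) is used only to guarantee $Q_m>0$ on $[0,1]$; by itself it says nothing about the sign of $F_n$.

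A second genuine gap: the identity $T_2+2=\phi\bigl((T_6+2)/4\bigr)$ holds only for $t\in[-1,1]$; at $t=\pm2$ one has $u=1$ but $v=4\ne\phi(1)=1$, so the series identity and any statement about the support of the representing measure simply do not govern the range $1\le|t|\le2$. Your closing paragraph locates the ``main obstacle'' in exhibiting the measure and controlling its support, but that is not where the difficulty lies. The paper needs a separate extension argument: for $|t|\in[1,2]$ there is $t_1\in\,]0,1]$ with $u(t)=u(t_1)$ and $v(t)\ge v(t_1)$, whence $vQ_l(u)-P_k(u)\ge v_1Q_l(u_1)-P_k(u_1)>0$ using $Q_l>0$ on $[0,1]$. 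Without this step you only get $F_n>0$ on $[-1,1]$. (Smaller issues: the flag property $\vect(C_0,\dots,C_n)=\vect(W_0,\dots,W_n)$ is obtained in the paper by an exact degree count, which requires proving $\deg P_n=n$ and $\deg Q_m=m$ exactly --- the nondegeneracy of the Hankel systems --- rather than by a three-term recurrence for $Q_n$; and the approximants used are the near-diagonal pairs $[k/k]$ and $[k/k-1]$ according to the parity of $n$, which is what makes the degree of $C_n$ come out to $2n+2\pent n2+1=\deg W_n$.)
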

We find, up to some multiplicative constant, 
\begin{eqnarray*}
C_{{0}}&=&t = W_0,\\
C_{{1}}&=&{t}^{3}=W_{{1}}+2\,W_{{0}},\\
C_{{2}}&=&{t}^{5} \left( {t}^{2}-6 \right)=W_{{2}}-10\,W_{{1}}-16\,W_{{0}},\\
C_{{3}}&=&{t}^{7} \left( {t}^{2}-9/2 \right)=W_{{3}}+7/2\,W_{{2}}-15\,W_{{1}}-21\,W_{{0}},\\
C_{{4}}&=&{t}^{9} \left( {t}^
{4}-12\,{t}^{2}+33 \right)=W_{{4}}-22\,W_{{3}}-56\,W_{{2}}+176\,W_{{1}}+231\,W_{{0}},\\
C_{{5}}&=&{t}^{11} \left( {t}^{4}-{\frac {
102}{11}}\,{t}^{2}+{\frac {234}{11}} \right)=
W_{{5}}+{\frac {52}{11}}\,W_{{4}}-40\,W_{{3}}-
{\frac {910}{11}}\,W_{{2}}+208\,W_{{1}}+260\,W_{{0}}.
\end{eqnarray*}
We deduce from theorem {\bf \ref{2n}} and lemma {\bf \ref{EE}} the
following useful result for the construction of the height function
$z(t)$ of our knots (see section {\bf \ref{tk}.}).  
\begin{corollary}\label{p2n}
The sequence 
$\tilde C_0=1, \, \tilde C_n = -\Frac 13 T_3 C_{n-1}$
of even polynomials in $\tilde E$ satisfies: 
$$
\vect(\tilde W_0, \ldots, \tilde W_n) =\vect(\tilde C_0,
\ldots, \tilde C_n), \, 
\tilde C_n = t^{2n} \tilde F_n ,\, \tilde F_n (0) = 1. 
$$
Furthermore $\tilde F_n(t)>0$ when $t\in [-2,2]$.
\end{corollary}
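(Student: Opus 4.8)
\pn
\textbf{Proof idea.} The plan is to deduce the whole statement from Theorem \ref{2n} and Lemma \ref{EE} by transporting the construction of Theorem \ref{2n} through multiplication by $T_3$. Membership and parity come for free: each $C_{n-1}$ is an odd polynomial of $E$ and $T_3(t)=t^3-3t$ is odd, hence $\tilde C_n=-\Frac13 T_3 C_{n-1}$ is an even polynomial lying in $T_3\cdot E\subset\tilde E$ by Lemma \ref{EE}; together with $\tilde C_0=1\in\tilde E$ this shows every $\tilde C_n$ is an even polynomial of $\tilde E$.

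The span equality is the first real step. A degree count from $\deg W_m=2m+2\pent m2+1$ and $\deg\tilde W_n=2n+2\pent{n+1}2$ gives $\deg(T_3W_m)=\deg\tilde W_{m+1}$, so, $T_3W_m$ being in $\tilde E=\vect(\tilde W_k,\ k\ge 0)$, it is a combination of $\tilde W_0,\dots,\tilde W_{m+1}$ with nonzero coefficient on $\tilde W_{m+1}$ --- concretely $T_3W_m=\tilde W_{m+1}+\tilde W_{m-1}$ for $m\ge1$ and $T_3W_0=\tilde W_1-\tilde W_0$, which follow from $V_{n+3}+V_{n-3}=T_3V_n$ as in the proof of Lemma \ref{EE}. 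Thus the change of family from $1,T_3W_0,\dots,T_3W_{n-1}$ to $\tilde W_0,\dots,\tilde W_n$ is triangular with nonzero diagonal, so $\vect(\tilde W_0,\dots,\tilde W_n)=\RR\cdot 1+T_3\cdot\vect(W_0,\dots,W_{n-1})$; by Theorem \ref{2n} this is $\RR\cdot 1+T_3\cdot\vect(C_0,\dots,C_{n-1})=\vect(\tilde C_0,\tilde C_1,\dots,\tilde C_n)$, since scaling by $-\Frac13$ does not change spans.

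It remains to factor and to check the sign. Substituting $C_{n-1}=t^{2n-1}F_{n-1}$ from Theorem \ref{2n},
$$
\tilde C_n=-\Frac13(t^3-3t)\,t^{2n-1}F_{n-1}=t^{2n}\cdot\Frac{3-t^2}{3}\,F_{n-1},
$$
so $\tilde F_n=\Frac{3-t^2}{3}F_{n-1}$, whence $\tilde F_n(0)=F_{n-1}(0)=1$ (and $\tilde F_0=1$ trivially). Since $F_{n-1}>0$ on $[-2,2]$ by Theorem \ref{2n}, the sign of $\tilde F_n$ on $[-2,2]$ is that of $3-t^2$, so $\tilde F_n>0$ on $(-\sqrt3,\sqrt3)$ --- an interval that still contains $[-1,1]$, where the crossing parameters of Proposition \ref{curve1} live. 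I expect this last point to be the one to watch: $3-t^2<0$ for $\sqrt3<\abs{t}\le 2$, so the genuine interval of positivity is $[-\sqrt3,\sqrt3]$ rather than all of $[-2,2]$; everything else reduces formally to Theorem \ref{2n} and Lemma \ref{EE}.
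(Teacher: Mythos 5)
Your proof is correct and is essentially the deduction the paper intends: the paper offers no written proof of this corollary beyond the phrase ``we deduce from Theorem \ref{2n} and Lemma \ref{EE}'', and your argument --- the triangular change of family via $T_3W_m=\tilde W_{m+1}+\tilde W_{m-1}$ for $m\ge1$ and $T_3W_0=\tilde W_1-\tilde W_0$, giving $\vect(\tilde W_0,\ldots,\tilde W_n)=\RR\cdot 1+T_3\cdot\vect(W_0,\ldots,W_{n-1})$, followed by the factorization $\tilde C_n=t^{2n}\,\Frac{3-t^2}{3}\,F_{n-1}$ --- is exactly the missing content.

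The point you flag at the end is not a defect of your argument but a genuine error in the statement of the corollary. Since $T_3(t)=t^3-3t$ vanishes at $\pm\sqrt3$, one has $\tilde F_n(\pm\sqrt3)=0$ and $\tilde F_n(t)<0$ for $\sqrt3<\abs{t}\le 2$ whenever $n\ge1$; concretely $\tilde C_1=-\Frac13 T_3C_0=t^2\left(1-\Frac{t^2}{3}\right)$, so $\tilde F_1(2)=-1/3$. The correct conclusion is positivity on $(-\sqrt3,\sqrt3)$ only, as you say. Fortunately the error does not propagate: in the one place the corollary is used (the proof of Proposition \ref{kn}), what is actually needed is $\tilde C_0=1$, the identity $\tilde C_n=-\Frac13 T_3C_{n-1}$ together with $T_3(c_i)\ne0$ for $0<c_i<1/2$, and the span equality; the positivity of $\tilde F_n$ on all of $[-2,2]$ is never invoked. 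So your proof establishes everything the paper relies on, and your restriction of the ``Furthermore'' clause to $\abs{t}<\sqrt3$ is the statement that should replace it.
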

\section{Construction of the prescribed curves}\label{pc}
We will construct polynomials $R_m$ in $E$ with $N=2n+1$ real roots
in $[-1,1]$ and no other roots in $[-2,2]$. They will be chosen as a
slight deformation of $C_n$. 
Let us first show properties of the polynomials $C_n$. 
\begin{lemma}\label{ap}
Let $0 < u_1 < \cdots < u_n <1$ be real numbers.
For $\eps$ being small enough, 
\bn
\item there exists a unique $(a_0, \ldots, a_{n-1})$ such that
  $\{0,\pm \eps u_1, \ldots, \pm \eps u_n\}$ are roots in $[-1,1]$ of  
$$
A_n(\eps) = C_n  + a_{n-1} C_{n-1} + \cdots + a_0 C_0.
$$
\item $\{0,\pm \eps u_1, \ldots, \pm \eps u_n\}$ are the only real
  roots in $[-2,2]$ of $A_n(\eps)$.
\en
\end{lemma}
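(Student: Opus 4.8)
The plan is to set up both claims as a consequence of the structure of the $C_n$'s provided by Theorem~\ref{2n}, namely that $C_n = t^{2n+1}F_n$ with $F_n(0)=1$ and $F_n>0$ on $[-2,2]$. First I would reformulate the root‑prescription problem as a linear system. Write $A_n(\eps) = C_n + a_{n-1}C_{n-1} + \cdots + a_0 C_0$; since every $C_j$ is an odd polynomial divisible by $t^{2j+1}$, we have $A_n(\eps) = t \cdot P_n(t^2)$ for a polynomial $P_n$ of degree $n$ in the variable $w=t^2$, with leading coefficient coming from $C_n$ (which we may normalize to $1$) and constant term $a_0$. Requiring that $\pm\eps u_1,\dots,\pm\eps u_n$ be roots of $A_n(\eps)$ is exactly requiring that $\eps^2u_1^2,\dots,\eps^2u_n^2$ be roots of $P_n$; since $\deg_w P_n = n$ and the leading coefficient is fixed, this forces
\[
P_n(w) = \prod_{i=1}^n (w - \eps^2 u_i^2),
\]
which determines $P_n$, hence $A_n(\eps)$, hence the coefficients $(a_0,\dots,a_{n-1})$ \emph{uniquely} by expanding this product in the basis $\{C_0,\dots,C_{n-1},C_n\}$ of $\vect(W_0,\dots,W_n)$. (Equivalently: the map sending $(a_0,\dots,a_{n-1})$ to the vector of values $\bigl(A_n(\eps)(\eps u_1),\dots,A_n(\eps)(\eps u_n)\bigr)$ is affine with linear part an $n\times n$ Vandermonde‑type matrix in $\eps^2u_i^2$, which is invertible since the $u_i$ are distinct and nonzero.) The root $t=0$ is automatic from the global factor $t$. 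This disposes of item 1, with no smallness of $\eps$ needed yet — the uniqueness is exact.

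For item 2, the point of taking $\eps$ small is to rule out \emph{extra} real roots in $[-2,2]$. Here I would argue by a perturbation/continuity estimate. From $P_n(w)=\prod_{i}(w-\eps^2u_i^2)$ we see that the coefficients of $P_n$, and hence the scalars $a_0,\dots,a_{n-1}$, are $O(\eps^2)$ as $\eps\to 0$: indeed each $a_j$ is (up to the fixed change of basis between the monomial basis in $w$ and the $C$‑basis) an elementary symmetric function of the $\eps^2u_i^2$, so $a_j = O(\eps^{2(n-j)})$. Therefore, on the compact interval $[-2,2]$,
\[
A_n(\eps)(t) = C_n(t) + \sum_{j=0}^{n-1} a_j C_j(t)
\]
differs from $C_n(t)=t^{2n+1}F_n(t)$ by a quantity that is $o(1)$ relative to the lower‑order terms; more precisely, after factoring out $t$, we compare $P_n(w)$ with $w^n F_n$‑part and note that away from a neighbourhood of $w=0$ the dominant term $w^n$ (coming from the normalized $C_n$) controls the sign, while near $w=0$ the explicitly prescribed roots $\eps^2u_i^2$ are the only ones, since $P_n$ has exactly $n$ roots (counted with multiplicity) and they are the $\eps^2u_i^2$, all real. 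The cleanest phrasing: $P_n$ is a degree‑$n$ polynomial whose $n$ roots are precisely $\eps^2u_1^2,\dots,\eps^2u_n^2$, all lying in $[0,\eps^2]\subset[0,1]$ for $\eps<1$; hence $A_n(\eps)(t)=t\,P_n(t^2)$ has, in $[-2,2]$, exactly the real roots $0$ and $\pm\eps u_i$ and no others — this part is in fact immediate once we observe that prescribing $n$ roots of a monic‑normalized degree‑$n$ polynomial prescribes \emph{all} of them. So item 2 follows from item 1 together with the positivity $F_n>0$ on $[-2,2]$, which guarantees the normalization (leading coefficient of $P_n$ in $w$) does not degenerate.

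The only genuine subtlety — and the step I expect to need the most care — is the bookkeeping of the change of basis between $\{C_0,\dots,C_n\}$ and the monomial basis $\{t, t^3, \dots, t^{2n+1}\}$ (equivalently $\{1,w,\dots,w^n\}$ after removing the factor $t$): I must check that this transition matrix is upper‑triangular with nonzero diagonal, which is exactly the content of $C_j = t^{2j+1}F_j$, $F_j(0)=1$, from Theorem~\ref{2n}, so that prescribing the top coefficient of $A_n(\eps)$ as that of $C_n$ is consistent and the system for $(a_0,\dots,a_{n-1})$ is triangular/invertible. Once that is in hand, both the existence‑uniqueness of item 1 and the ``no other roots'' of item 2 are formal consequences, and the role of ``$\eps$ small enough'' reduces to keeping the prescribed roots $\pm\eps u_i$ inside $[-1,1]$ (take $\eps\le 1$) — in particular the $a_j=O(\eps^{2(n-j)})$ estimate, while reassuring, is not logically essential. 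I would therefore present the argument in the order: (i) reduce to $w=t^2$ and the factor $t$; (ii) invoke Theorem~\ref{2n} for the triangular change of basis; (iii) conclude $P_n(w)=\prod(w-\eps^2u_i^2)$ is forced, giving item 1; (iv) read off that these are \emph{all} roots, giving item 2.
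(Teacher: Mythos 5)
Your reduction to a degree-$n$ polynomial in $w=t^2$ is where the argument breaks down. You write $A_n(\eps)=t\,P_n(t^2)$ with $\deg_w P_n=n$, but the $C_j$ are not the monomials $t^{2j+1}$ up to normalization: by Theorem~\ref{2n}, $C_j=t^{2j+1}F_j$ where $F_j$ is a genuinely nonconstant polynomial for $j\geq 2$ (e.g.\ $C_2=t^5(t^2-6)$, $C_4=t^9(t^4-12t^2+33)$), and $\deg C_n=2n+2\pent n2+1$. Hence $A_n(\eps)=t\,P(t^2)$ with $\deg_w P=n+\pent n2$, the triangular change of basis you invoke between $\{C_0,\dots,C_n\}$ and $\{t,t^3,\dots,t^{2n+1}\}$ does not exist (these span different spaces), and prescribing the $n$ roots $\eps^2u_i^2$ neither forces $P=\prod_i(w-\eps^2u_i^2)$ nor accounts for all of its roots. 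Both of your conclusions collapse with this: item~1 is not an ``exact uniqueness with no smallness of $\eps$ needed,'' and item~2 is not ``immediate,'' since $P$ has $\pent n2$ further roots that must be kept out of the relevant range.

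The paper instead obtains uniqueness in item~1 from the invertibility of the $n\times n$ linear system $A_n(\eps)(\eps u_i)=0$, whose determinant is $\eps^{n^2}$ times a perturbation of the Vandermonde-type determinant $u_1\cdots u_n\prod_{i<j}(u_j^2-u_i^2)\neq 0$ (this is where $F_j(0)=1$ enters), hence nonzero for $\eps$ small. For item~2 it writes $A_n(\eps)=t\prod_{i=1}^n(t^2-\eps^2u_i^2)\,D_n(\eps)$ with $D_n(\eps)$ of positive degree, uses Cramer's formulas to get $a_k(\eps)={\cal O}(\eps^{2(n-k)})$, so that $A_n(\eps)\to C_n$ and $D_n(\eps)\to F_n$ as $\eps\to 0$, and concludes by compactness of $[-2,2]$ that $D_n(\eps)>0$ there for $\eps$ small, because $F_n>0$ on $[-2,2]$. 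This limiting argument is precisely the step your proposal dismisses as ``not logically essential''; it is in fact the entire content of item~2, and it is the place where the positivity of $F_n$ on $[-2,2]$ --- the hard part of Theorem~\ref{2n}, established via the Stieltjes/Pad{\'e} machinery --- is actually used.
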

\begin{proof}
Looking for $A_n(\eps)$ with roots $0$ and $\pm \eps u_i$ is equivalent to
the linear system
$$
\left ( 
\begin{array}{cccc}
C_{0}(\eps u_1) & C_{1}(\eps u_1) & \cdots & C_{n-1}(\eps u_1) \\
C_{0}(\eps u_2) & & \cdots & C_{n-1}(\eps u_2)\\
\vdots & & & \vdots\\
C_{0}(\eps u_n) & C_{1}(\eps u_n) & \cdots &C_{n-1}(\eps u_n)
\end{array}
\right ) 
\left ( 
\begin{array}{c}
a_0 \\ a_1 \\  \vdots \\ a_{n-1}
\end{array}
\right )
=
-
\left ( 
\begin{array}{c}
C_n(\eps u_1) \\ C_n(\eps u_2) \\  \vdots \\ C_n(\eps u_n)
\end{array}
\right )
$$
whose determinant is 
$
\eps^{(1+3+\cdots+2n-1)} 
\left |
\begin{array}{cccc}
u_1 F_{0}(\eps u_1) & u_1^{3} F_{1}(\eps u_1) & 
\cdots & u_1^{2n-1}F_{n-1}(\eps u_1) \\
u_2 F_{0}(\eps u_2) & & \cdots & u_2^{2n-1}F_{n-1}(\eps u_2)\\
\vdots & & & \vdots\\
u_n F_{0}(\eps u_n) & u_n^{3} F_{1}(\eps u_n) & \cdots &
u_n^{2n-1} F_{n-1}(\eps u_n)
\end{array}
\right |. 
$\\
It is equivalent to the classical 
Vandermonde-type determinant when $\eps \rightarrow 0$:
$$ 
\eps^{n^2} 
\left |
\begin{array}{cccc}
u_1 & u_1^{3} & \cdots & u_1^{2n-1} \\
u_2 & & \cdots & u_2^{2n-1}\\
\vdots & & & \vdots\\
u_n & u_n^{3} & \cdots & u_n^{2n-1}
\end{array}
\right |
= \eps^{n^2} u_1 \cdots u_n \prod_{1 \leq i < j \leq n} (u_j^2 -
u_i^2) \not = 0.
$$
Therefore, this system has a unique
solution. 
\pn 
\tiret Using Cramer formulas, we get  
$
a_k(\eps) = \Frac{G_k(\eps)}{G_{n}(\eps)}
$
where 
\begin{eqnarray*}
G_k (\eps) = \det (C_{i_j} (\eps u_i))_{1 \leq i,j \leq n}
\end{eqnarray*}
and $i_1 < i_2 < \cdots < i_n \in \{0,\ldots,n\} - \{k\}$.
We get  
\begin{eqnarray*}
G_k(\eps) &=& 
\eps^{2(i_1 + \cdots + i_n)+n}
\left | 
\begin{array}{cccc}
u_1^{2i_1+1} F_{i_1}(\eps u_1) & u_1^{2i_2+1} F_{i_2}(\eps u_1) & 
\cdots & u_1^{2i_n+1}F_{i_n}(\eps u_1) \\
u_2^{2i_1+1} F_{i_1}(\eps u_2) & & \cdots & u_2^{2i_n+1}F_{i_n}(\eps u_2)\\
\vdots & & & \vdots\\
u_n^{2i_1+1} F_{i_1}(\eps u_n) & u_n^{2i_2+1} F_{i_2}(\eps u_n) & \cdots &
u_n^{2i_n+1} F_{i_n}(\eps u_n)
\end{array}
\right |\\
&\equi_{\eps \to 0}&
\eps^{(n+1)^2-(2k+1)} 
\left | 
\begin{array}{cccc}
u_1^{2i_1+1} & u_1^{2i_2+1} & \cdots & u_1^{2i_n+1} \\
u_2^{2i_1+1} & & \cdots & u_2^{2i_n+1}\\
\vdots & & & \vdots\\
u_n^{2i_1+1} & u_n^{2i_2+1} & \cdots & u_n^{2i_n+1}
\end{array}
\right |. 
\end{eqnarray*}
\tiret
We thus deduce that $a_k(\eps) = {\cal O}(\eps^{2(n-k)})$ and therefore
$$\lim_{\eps \to 0} A_n(\eps) = C_n = t^{2n+1} F_n.$$
Let $A_n (\eps)= t\prod_{i=1}^n (t^2-\eps^2 u_i^2) D_n(\eps)$.
We deduce that $\Lim_{\eps \to 0} D_n = F_n$. 
Let $\eps$ be small enough, we get $D_n(t)>0$ for $t \in[-2,2]$ 
because of the compactness of $[-2,2]$.
\qed%
\end{proof}
\begin{proposition}\label{curve2}
Let $N=2n+1$ be an odd integer. There exists a curve $\cC(t)=(x(t),y(t))$,
where $\deg x = 3$ and $\deg y = N+2 \pent N4 +1$, such that $\cC$ has
exactly $N$ crossing points  
corresponding to parameters $(s_i, t_i)$ such that 
\begin{equation}
\cC(s_i) = \cC(t_i), \ 
s_1< \cdots < s_N
< t_1 <\cdots <t_N. \nonumber
\end{equation}
\end{proposition}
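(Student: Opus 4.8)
The plan is to glue together the three ingredients already at hand: the positive polynomials $C_n=t^{2n+1}F_n$ of Theorem~{\bf \ref{2n}}, the perturbation $A_n(\eps)$ of Lemma~{\bf \ref{ap}}, and the dictionary of Proposition~{\bf \ref{curve1}} between the roots in $[-2,2]$ of a polynomial written in the $V$-basis and the crossing points of the corresponding curve. Set $N=2n+1$, fix any reals $0<u_1<\cdots<u_n<1$, and let $\eps>0$ be small enough for Lemma~{\bf \ref{ap}} to apply. Then
$$A_n(\eps)=C_n+a_{n-1}C_{n-1}+\cdots+a_0C_0$$
has, for its set of roots in $[-2,2]$, exactly $\{0,\pm\eps u_1,\ldots,\pm\eps u_n\}$, which are $N$ distinct points all lying in $(-1,1)$.

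The next step is to recognise $\pm A_n(\eps)$ as an admissible $R_m$ in the sense of Proposition~{\bf \ref{curve1}}. Since $A_n(\eps)\in\vect(C_0,\ldots,C_n)=\vect(W_0,\ldots,W_n)\subset E$, its expansion in the basis $(V_j)$ involves only $V_j$ with $j\equiv 1$ or $3\pmod 6$; moreover $\deg A_n(\eps)=\deg C_n=\deg W_n$, and after rescaling $A_n(\eps)$ by a nonzero constant (which changes neither its roots nor the fact that it lies in $E$) we may assume that the coefficient of $V_{\deg W_n}$ equals $1$ — legitimate for $\eps$ small, since the $a_k$ are ${\cal O}(\eps^{2(n-k)})$. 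Now $\eps_k=0$ for $k\equiv 0\pmod 3$, while $\eps_k=1$ for $k\equiv 1,2\pmod 6$ and $\eps_k=-1$ for $k\equiv 4,5\pmod 6$; hence in the expression $R_m=\sum_{k=1}^m\eps_k a_k V_{k-1}$ the only indices that can feed a vector of $E$ are $k\equiv 2$ or $4\pmod 6$, the index $k$ then contributing $\pm a_k$ to the coefficient of $V_{k-1}$. Reading this correspondence backwards — set $a_{j+1}$ equal to the coefficient of $V_j$ in $A_n(\eps)$ when $j\equiv 1\pmod 6$, to its opposite when $j\equiv 3\pmod 6$, and $a_k=0$ for all other $k$ — exhibits $A_n(\eps)$, respectively $-A_n(\eps)$, as a polynomial of the shape~\bref{R} with leading coefficient $\eps_m$ and $a_m=1$: when $n$ is even $\deg W_n\equiv 1\pmod 6$, one keeps $A_n(\eps)$ and gets $m=\deg W_n+1\equiv 2\pmod 6$; when $n$ is odd $\deg W_n\equiv 3\pmod 6$, one uses $-A_n(\eps)$ and gets $m=\deg W_n+1\equiv 4\pmod 6$. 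In both cases $m=\deg W_n+1=N+2\pent N4+1$ (check the residue of $N$ modulo $4$), and passing from $A_n(\eps)$ to $\pm A_n(\eps)$ has not altered the roots, so $R_m$ still has exactly the $N$ distinct roots $\{0,\pm\eps u_i\}\subset(-1,1)$ and no other root in $[-2,2]$.

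It then suffices to feed this $R_m=\eps_m V_{m-1}+\eps_{m-1}a_{m-1}V_{m-2}+\cdots+\eps_1 a_1 V_0$ into Proposition~{\bf \ref{curve1}}: the curve $\cC(t)=(T_3(t),\,T_m(t)+a_{m-1}T_{m-1}(t)+\cdots+a_1T_1(t))$ satisfies $\deg x=3$, $\deg y=m=N+2\pent N4+1$, has exactly $N$ crossing points, and the corresponding parameters satisfy $s_1<\cdots<s_N<t_1<\cdots<t_N$. I expect the only genuinely delicate part to be the arithmetic bookkeeping of the middle paragraph — which $V_j$ may occur, the sign carried by each $\eps_k$, and the resulting value of $m$ — since everything analytic (the existence and positivity of the $F_n$, and the absence of spurious roots of $A_n(\eps)$) has already been established in Theorem~{\bf \ref{2n}} and Lemma~{\bf \ref{ap}}.
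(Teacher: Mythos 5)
Your proof is correct and follows essentially the same route as the paper: construct $A_n(\eps)$ with the prescribed $N$ roots via Lemma {\bf \ref{ap}}, rewrite it in the $V$-basis as an admissible $R_m$ with $m=\deg W_n+1=N+2\pent N4+1$, and invoke Proposition {\bf \ref{curve1}}. The only cosmetic difference is that you flip the sign of $A_n(\eps)$ when $\eps_m=-1$ to keep $y$ monic, whereas the paper absorbs that sign by taking $y=\eps_m T_m+\cdots$; the detailed bookkeeping of which $\eps_k$ vanish and which $V_{j}$ occur is carried out explicitly in your version and left implicit in the paper's.
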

\begin{proof}
Let $N=2n+1$. Let us choose $\eps$ and $0<u_1<\cdots<u_n$, such that  
there exists a polynomial $A_n(\eps) \in \vect(W_0, \ldots, W_n)$ 
having exactly $N$ distinct roots $\{0, \pm \eps u_1, \ldots, \pm \eps
u_n\}$ in $[-2,2].$ It has degree $\deg W_n = 2n + 2 \ent n2 +1 =
N+2\pent N4  = m-1$.  We have
$$
 A_n = W_n + \cdots + a_0  = V_{m-1} + \eps_{m-1} a_{m-1} V_{m-2}
+\cdots +  a_2 V_1.
$$
Using proposition {\bf \ref{curve1}}, the curve 
$$
x(t) = T_3(t) , \ y(t) = \eps_m T_m(t) + a_{m-1}T_{m-1}(t) + \cdots + a_2 T_2(t)
$$
has the required properties. 
\qed%
\end{proof}
\begin{example}[$N=9$]
We chose 
$$y(t) = 
-{\Frac {27}{10}}\,T_{{14}} 
+10\,T_{{12}}
-23\,T_{{10}}
+42\,T_{{8}}
-64\,T_{{6}}
+85\,T_{{4}}
-100\,T_{{2}}
+112.$$
The roots of $Q$ are $u_0 = 0, \pm u_1 =  .355, \pm u_2 =
.584, \pm u_3 = .785, \pm u_4= 1.073$.
We obtain a polynomial parametrization of degree $(3,14)$. 
Note that we choose $u_4>1$ for a nicer picture (see
fig. \ref{fig9}). Note that the parameters of the crossing points
satisfy $s_1 < \cdots < s_9 < t_1 < \cdots <t_9$.  
\begin{figure}[th]
\begin{center}
\begin{tabular}{ccc}
{\scalebox{.4}{\includegraphics{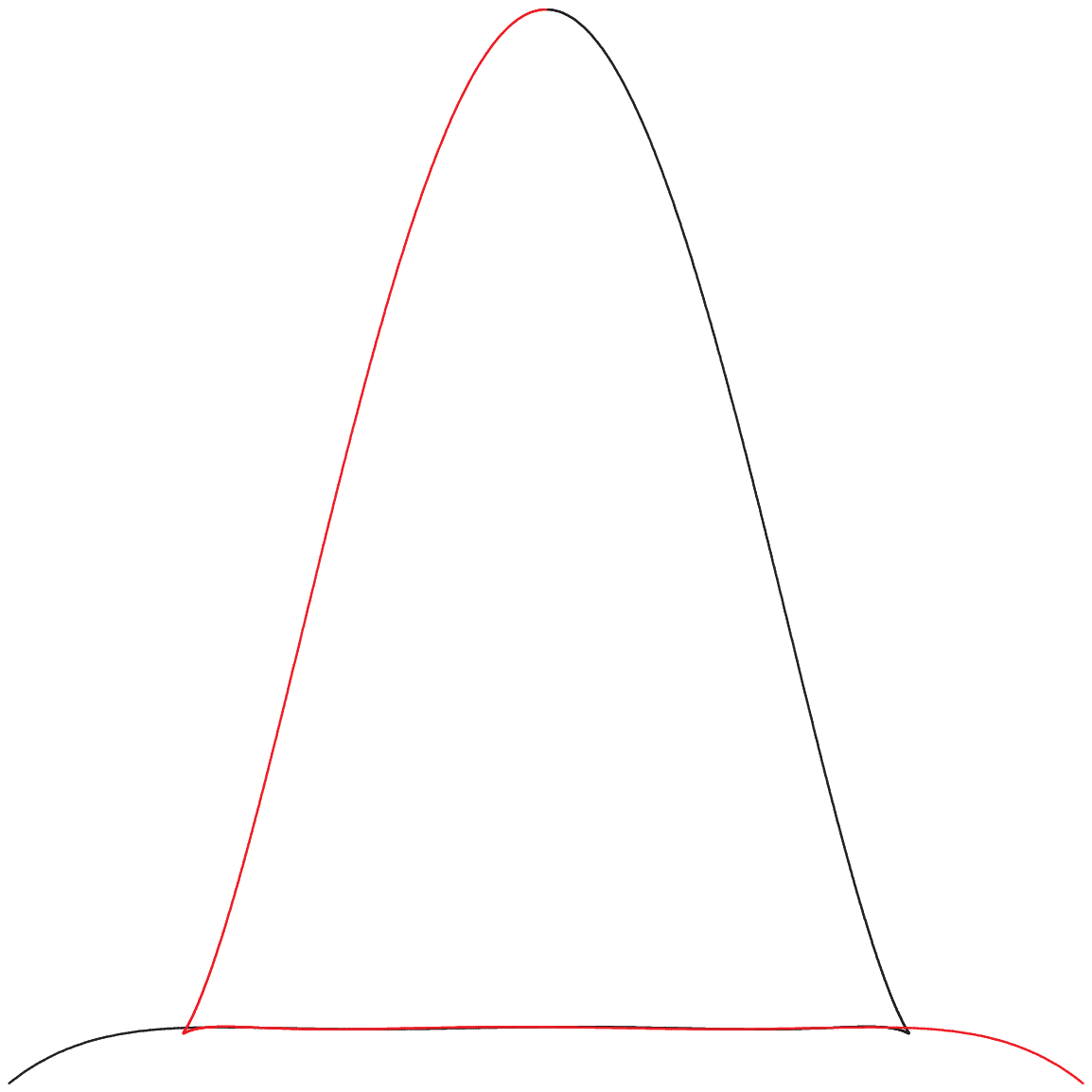}}}
&\quad&
{\scalebox{.4}{\includegraphics{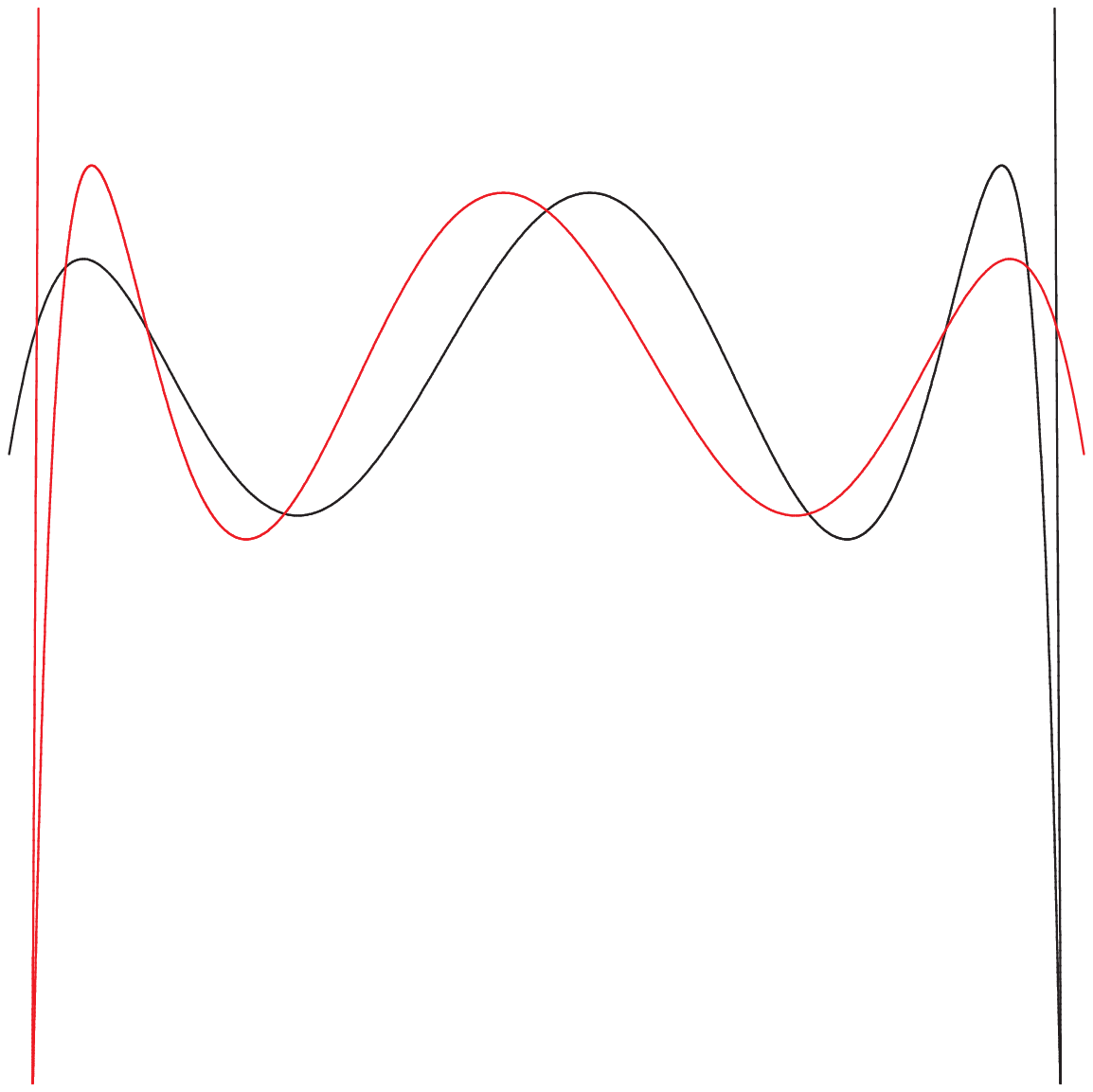}}}\\
Bottom view &  & Zoom on the bottom view\\
{\scalebox{.4}{\includegraphics{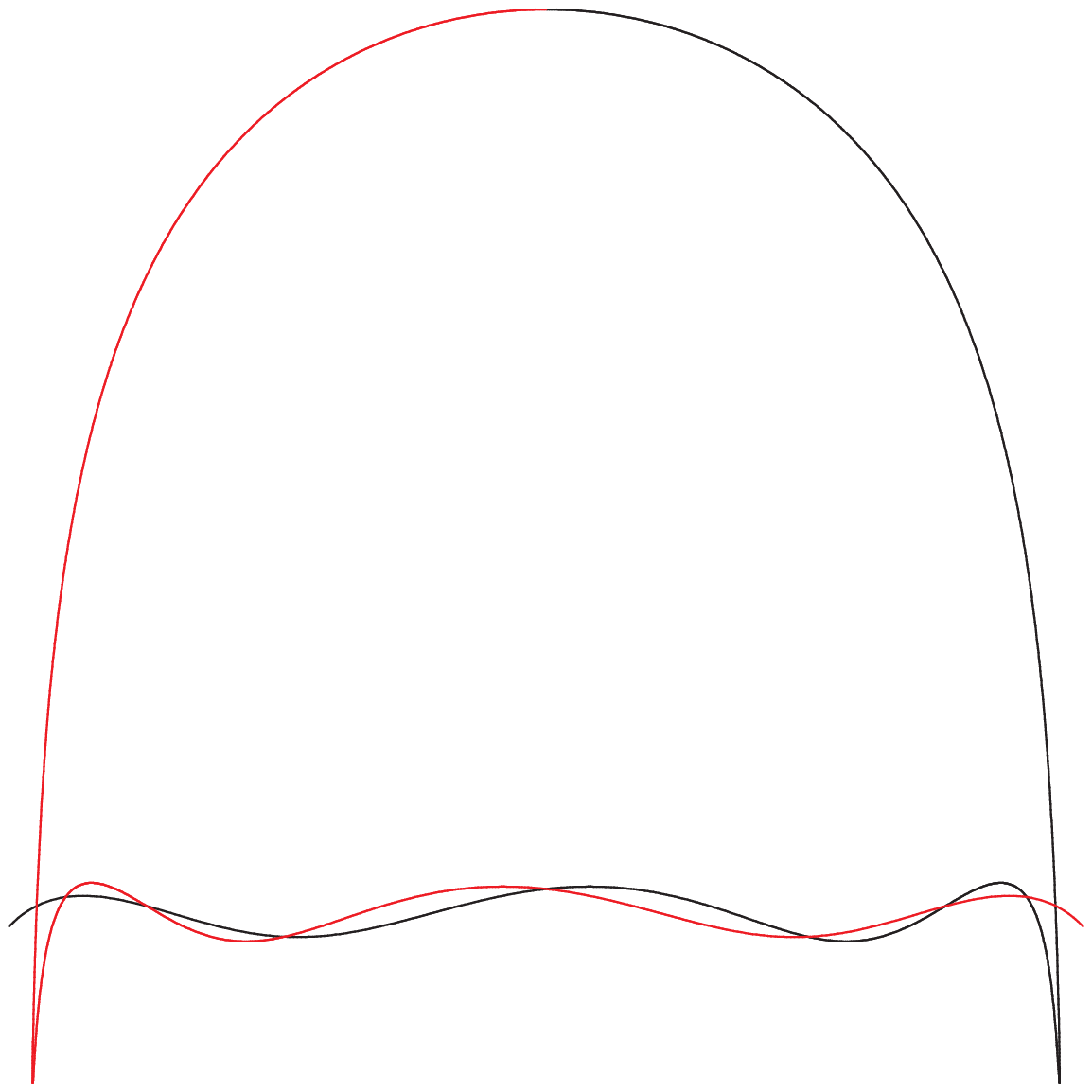}}}
&\quad&
{\scalebox{.4}{\includegraphics{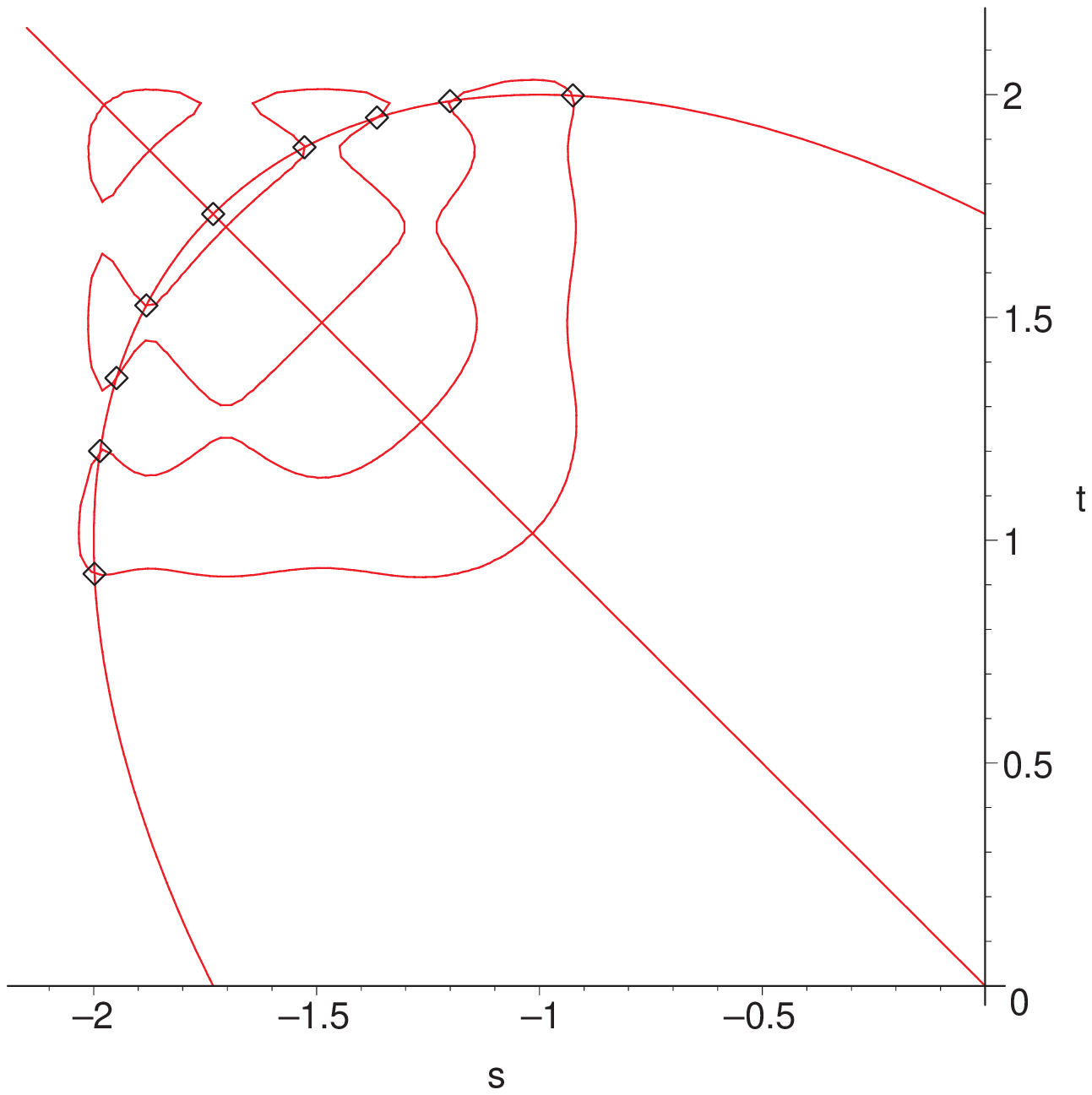}}}\\
logarithmic scale &  & $\Frac{x(s)-x(t)}{s-t}=0, \
\Frac{y(s)-y(t)}{s-t}=0, s<t$
\end{tabular}
\end{center}
\vspace{8pt}
\caption{$N=9$. Curve of degree $(3,14)$\label{fig9}}
\end{figure}
\end{example}
\section{Construction of the torus knots}\label{tk}
If $N=2n+1$ is odd, the torus knot $K_{2,N}$ of type $(2,N)$ is the
boundary of a  Moebius band  twisted $N$ times (see \cite{Ad,Mu} and
fig. (\ref{fig1})).
\begin{figure}[th]
\centerline{
{\scalebox{.8}{\includegraphics{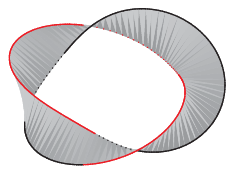}}} \quad
{\scalebox{.8}{\includegraphics{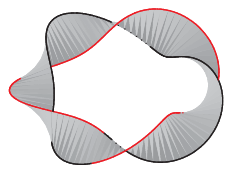}}} \quad
{\scalebox{.8}{\includegraphics{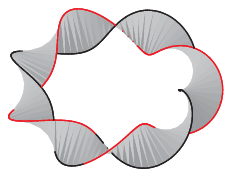}}}}
\vspace{10pt}
\centerline{
{\scalebox{.8}{\includegraphics{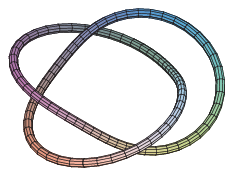}}} \quad
{\scalebox{.8}{\includegraphics{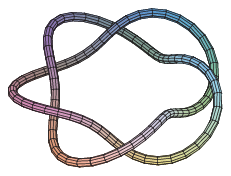}}} \quad
{\scalebox{.8}{\includegraphics{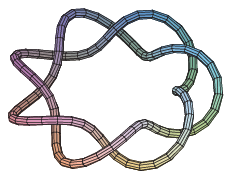}}}}
\vspace*{8pt}
\caption{$K_{2,N}$, $N=3,5,7$.\label{fig1}}
\end{figure}
The purpose of this section is to give an explicit construction of a
polynomial curve $\cC(t) = (x(t), y(t), z(t))$ that
is equivalent (in the one point compactification $ {\bf S}^3$ of the
space $ {\RR}^3$) to the torus knot $K_{2,N}$.  

Vassiliev (see \cite{Va}) proved that any non-compact knot type can be
obtained from a polynomial embedding
$ \  t \mapsto ( f(t), g(t), h(t) ),  t \in \RR$, 
using the Weierstrass approximation theorem.

Shastri \cite{Sh} gave a detailed proof of this theorem,
and a simple polynomial parametrizations of the trefoil and of the
figure eight knot.
A. Ranjan and R. Shukla \cite{RS} have found small degree
parametrizations for $K_{2,N}, N$ odd. They proved that these knots
can be attained from polynomials of degrees  $ (3, 2N-2, 2N-1).$

In \cite{KP}, we proved that it is not  possible to attain the torus
knot $K_{2,N}$ with polynomial of degrees $(3,N+1,m)$ when $N>3$. We
gave explicit parametrization of degrees $(3,N+ 2\pent
N4+1,N+2\pent{N+1}4)$ for $N=3,5,7,9$ and showed that they were of
minimal lexicographic degree for $N\leq 7$. 
\pn
A sufficient condition is to construct a parametrized curve 
$\cC(t)=(x(t),y(t),z(t))$
such that $(x(t),y(t))$ has exactly $N=2n+1$ crossing points corresponding to
parameters   
$$s_1 < \cdots < s_N < t_1 < \cdots < t_N$$ 
and such that
$$x(t_i)=x(s_i), \ y(t_i)=y(s_i), 
\, (-1)^i(z(t_i)-z(s_i))>0, \ i = 1, \ldots , N.$$
We look first for minimal degree in $x$. 
$x(t)$ must be nonmonotonic and therefore has degree at least 2. In case when 
$\deg x=2$, we would have constant $t_i + s_i$ and not the condition
(\ref{st}).  We will give a construction for $\deg x=3$. 
\begin{proposition}\label{kn}
For any odd integer $N=2n+1$, there exists 
a curve $\cC (t) = (x(t),y(t),z(t))$ of degree 
$(3,N+ 2\pent N4+1,N+2\pent{N+1}4)$ 
such that the curve $(x(t),y(t))$ has exactly $N$ crossing points 
$$x(t_i)=x(s_i), \ y(t_i)=y(s_i), \, 
s_1< \cdots < s_N < t_1 <\cdots <t_N
$$ 
and 
$$(-1)^i (z(t_i)-z(s_i)) >0 , \ i = 1, \ldots , N.$$
\end{proposition}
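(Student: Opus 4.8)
The plan is to build the three coordinate functions separately, reusing the planar construction of Proposition \ref{curve2} for $(x(t),y(t))$ and using Corollary \ref{p2n} to produce a compatible height function $z(t)$. First I would fix $N=2n+1$ and invoke Lemma \ref{ap} and Proposition \ref{curve2}: choosing $\eps$ small and $0<u_1<\cdots<u_n<1$, set $x(t)=T_3(t)$ and take $y(t)$ to be (the $T$-expansion corresponding to) $A_n(\eps)=C_n+a_{n-1}C_{n-1}+\cdots+a_0C_0\in\vect(W_0,\ldots,W_n)$, whose only roots in $[-2,2]$ are $\{0,\pm\eps u_i\}$. By Proposition \ref{curve1}, $(x(t),y(t))$ then has exactly $N$ crossing points with parameters $s_i=2\cos(\alpha_i+\pi/3)$, $t_i=2\cos(\alpha_i-\pi/3)$ satisfying $s_1<\cdots<s_N<t_1<\cdots<t_N$, where the $u_i$ (together with $u_0=0$) are $2\cos\alpha_i$. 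Note the degree of $y$ is $\deg W_n=2n+2\pent n2+1=N+2\pent N4$.

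Next I would construct $z(t)$ as a polynomial of degree $N+2\pent{N+1}4=\deg\tilde W_n=2n+2\pent{n+1}2$ lying in $\tilde E$. The key algebraic identity is (\ref{tv}): for $s\neq t$ with $T_3(s)=T_3(t)$ one has $\bigl(T_k(t)-T_k(s)\bigr)/(t-s)=\eps_k V_{k-1}(s+t)$, and in particular $\bigl(z(t_i)-z(s_i)\bigr)/(t_i-s_i)$ is, for $z$ a combination of the relevant $T_k$, a linear combination of the $\tilde W_j$ evaluated at $s_i+t_i=2\cos\alpha_i=u_i$ (using (\ref{sta}) and a sum-to-product formula, $s_i+t_i=u_i$ up to the normalization already in force). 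Since the $u_i$ together with $u_0=0$ are exactly the roots of $A_n(\eps)$ scaled by $\eps$, I want the "height difference polynomial" $Z(u):=\sum c_j\tilde W_j(u)$ to satisfy $(-1)^i Z(\eps u_i)>0$; taking $z$ so that $Z$ equals a suitable scalar multiple of $\tilde C_n=-\frac13 T_3 C_{n-1}$, or more precisely a small deformation of it inside $\vect(\tilde C_0,\ldots,\tilde C_n)$, gives $Z(u)=u^{2?}\tilde F_n(u)\cdot(\text{something})$ — but this does not yet alternate in sign. The right choice is to take $Z$ proportional to $\prod_{i=1}^n(u^2-\eps^2 u_i^2)$ times $u$ times a positive factor: such a polynomial does alternate sign at $u=\pm\eps u_1,\ldots,\pm\eps u_n$ and vanishes at $0$, and by Corollary \ref{p2n} and Lemma \ref{EE} it can be realized (for $\eps$ small) as an element of $\tilde E$ of the prescribed degree, since $\tilde E=1\oplus T_3\cdot E$ and the factored polynomial $u\prod(u^2-\eps^2 u_i^2)\tilde D(u)\in E$ was already produced in the proof of Lemma \ref{ap}. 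Concretely I would reverse-engineer $z(t)$ from $Z$ via (\ref{tv}): if $\sum c_j\tilde W_{j}=\sum_k b_k \eps_k^{-1}\cdot(\text{coefficient bookkeeping})$, then $z(t)=\sum_k b_k T_k(t)$ does the job, and its degree is that of $\tilde W_n$.

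The delicate point is compatibility of the alternation condition with the already-fixed crossing data, and keeping all degrees minimal simultaneously. The subtlety is that the $u_i$ in the $z$-construction are not free: they are the same $u_i$ chosen for $y$, and the correspondence $\alpha_i\mapsto(s_i,t_i)$ from (\ref{sta}) must be checked to be order-preserving so that $(-1)^i$ really tracks consecutive crossings. I would verify, as in \cite{KP}, that $\alpha\mapsto 2\cos\alpha$ being decreasing on $(0,\pi)$ forces $u_1>\cdots>u_n$ or $<$ consistently with the ordering of the $s_i$ and $t_i$, so that evaluating the sign-alternating factor $\prod_{i}(u-\eps u_i)$-type product at the ordered $u_i$ indeed yields $(-1)^i$ up to an overall sign, which can be absorbed by replacing $z$ with $-z$. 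The main obstacle I expect is precisely this sign/ordering bookkeeping together with confirming that the deformation argument of Lemma \ref{ap} (the $\eps\to 0$ limit, compactness of $[-2,2]$, positivity of the $\tilde F_n$ from Corollary \ref{p2n}) still guarantees $z(t)$ has no spurious coincidences — i.e. that $z$ genuinely separates the two strands at every crossing and nowhere else interferes. Once that is in hand, the triple $(x(t),y(t),z(t))$ has exactly the stated degree $(3,\,N+2\pent N4+1,\,N+2\pent{N+1}4)$ and satisfies all the required conditions.

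\qed
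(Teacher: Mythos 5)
The $(x,y)$ part of your plan matches the paper. The construction of $z$, however, contains a fatal error. You propose to take the height-difference polynomial $Z(u)$ proportional to $u\prod_{i=1}^n(u^2-\eps^2u_i^2)$ times a positive factor, with the $u_i$ the \emph{same} parameters used for $y$. But by eq. (\ref{tv}) the quantity $\bigl(z(t_i)-z(s_i)\bigr)/(t_i-s_i)$ equals $Z$ evaluated at $s_i+t_i=2\cos\alpha_i$, i.e.\ exactly at the crossing parameters $0,\pm\eps u_1,\ldots,\pm\eps u_n$ --- and your $Z$ \emph{vanishes} there. A polynomial ``alternating sign'' in the sense of changing sign as it passes through its roots is not what is needed: you need $Z$ to be \emph{nonzero} with sign $(-1)^i$ at each of the $N$ points. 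With your choice, $z(t_i)=z(s_i)$ at every crossing, so the space curve is singular at all $N$ points instead of being an embedding. There is also a parity obstruction you cannot fix by tweaking the positive factor: since the parameters come in pairs $u_i=-u_{N+1-i}$ and $(-1)^{N+1-i}=(-1)^i$ (as $N+1$ is even), the target values at $u$ and $-u$ are \emph{equal}, so $Z$ must be an \emph{even} function; your $u\prod(u^2-\eps^2u_i^2)\cdot(\hbox{even positive})$ is odd.

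The paper's route is different and is forced by these two constraints: it poses the \emph{interpolation} problem $B_n(u_i)=(-1)^i$, $i=1,\ldots,N$, in the $(n+1)$-dimensional space $\vect(\tilde C_0,\ldots,\tilde C_n)$ of \emph{even} polynomials from Corollary \ref{p2n}. Evenness collapses the $2n+1$ conditions to $n+1$, and the resulting linear system is shown to be invertible by reducing its determinant (via $\tilde C_0=1$, $\tilde C_k=-\frac13T_3C_{k-1}$, $C_k(0)=0$) to the nonvanishing determinant $\det\bigl(C_{j}(c_i)\bigr)$, which is arranged at the outset by the choice of the $c_i$. Then $B_n$ is rewritten in the $V_j$ basis and lifted to $z(t)$ through (\ref{tv}), giving $\bigl(z(t_i)-z(s_i)\bigr)/(t_i-s_i)=(-1)^i$ exactly, with $\deg z=\deg\tilde W_n=N+2\pent{N+1}{4}$. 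Your proposal does not contain this interpolation step, and without it (or an equivalent even, nonvanishing, sign-alternating construction) the key inequality $(-1)^i(z(t_i)-z(s_i))>0$ is not established.
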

\begin{proof}
\tiret
Following the construction of section {\bf \ref{pc}.},
we first choose $\eps$ to be small enough and 
$0 < c_1 < \cdots < c_n < \eps < 1/2$, such that 
$$
\left \vert
\begin{array}{cccc}
C_{0}(c_1) & C_{1}(c_1) & \cdots & C_{n-1}(c_1) \\
C_{0}(c_2) & & \cdots & C_{n-1}(c_2)\\
\vdots & & & \vdots\\
C_{0}(c_n) & C_{1}( c_n) & \cdots &C_{n-1}(c_n)
\end{array}
\right \vert \not = 0.
$$
Consider 
\begin{eqnarray*}
u_{n+1} = 0, \, u_i = 2\cos \alpha_i  = - c_{n+1-i},\,
u_{n+1+i} = 2\cos \alpha_{n+1+i} = c_i, 
\, i = 1,\ldots, n.
\end{eqnarray*}
We thus have $-1< u_1 <\ldots < u_N <1$. 
Let 
$$s_i = 2 \cos(\alpha_i + \pi/3), \, t_i = 2 \cos(\alpha_i - \pi/3),
\, i =1, \ldots N.$$
%\pn
\tiret
Using the proposition (\ref{curve2}), there is a polynomial 
$$
y(t) = T_m(t) + a_{m-1}T_{m-1}(t) + \cdots + a_1 T_1(t), 
$$
of degree $m = N + 2 \pent N4+1$ such that 
$$x(t_i)=x(s_i), \ y(t_i)=y(s_i), \, i =1, \ldots N.$$ 
%\pn
\tiret As for lemma {\bf \ref{ap}}, there exists a unique 
$(b_0, \ldots, b_{n})$ 
such that 
$$
B_n = b_n \tilde C_n + b_{n-1} \tilde C_{n-1} + \cdots + b_{0} \tilde C_0
$$
satisfies $B_n(u_i) = (-1)^i, \, i=1,\ldots N$. Namely, 
$(b_0, \ldots, b_{n})$ is the solution of the system 
$$
 b_n \tilde C_n (u_i) + b_{n-1} \tilde C_{n-1}(u_i) + \cdots + b_{0}
 \tilde C_0(u_i) = (-1)^i, \, i=1,\ldots, N. 
$$
Because $u_i = -u_{N+1-i}$ and $\tilde C_k$ are even polynomials, the
system is equivalent to 
$$
\left ( 
\begin{array}{cccc}
\tilde C_{0}(u_{n+1}) & \tilde C_{1}(u_{n+1}) & \cdots & \tilde C_{n}(u_{n+1}) \\
\tilde C_{0}(u_{n+2}) & & \cdots & \tilde C_{n}(u_{n+2})\\
\vdots & & & \vdots\\
\tilde C_{0}(u_N) & \tilde C_{1}(u_N) & \cdots &\tilde C_{n}(u_N)
\end{array}
\right ) 
\left ( 
\begin{array}{c}
b_0 \\ b_1 \\  \vdots \\ b_{n}
\end{array}
\right )
=
\left ( 
\begin{array}{c}
(-1)^{n+1} \\ (-1)^{n+2}\\  \vdots \\ (-1)^{N}
\end{array}
\right ).
$$
From $\tilde C_0 = 1$, $\tilde C_n = -\Frac 13 T_3 C_{n-1}$ and
$C_k(u_{n+1})=0$, we deduce that the determinant of the previous system is
$$
\pm\Frac{1}{3^n} T_3(c_1) \cdots T_3(c_n)
\left \vert
\begin{array}{cccc}
C_{0}( c_1) & C_{1}(c_1) & \cdots & C_{n-1}( c_1) \\
C_{0}( c_2) & & \cdots & C_{n-1}(c_2)\\
\vdots & & & \vdots\\
C_{0}(c_n) & C_{1}( c_n) & \cdots &C_{n-1}(c_n)
\end{array}
\right \vert \not = 0.
$$
$B_n$ is a linear combination of $(\tilde W_n,\ldots,
\tilde W_0)$ and it has degree $m'=N+2\pent{N+1}4$: 
$$
B_n = b'_{m'} V_{m'} + \cdots + b'_0 V_0. 
$$
Consider now $z(t) = \eps_{m'+1} b'_{m'} T_{m'+1} + \cdots + \eps_1 b'_0
T_1$, we have using eq. (\ref{tv}):
$$
\Frac{z(t_i) -z(s_i)}{t_i -s_i}=B_n(u_i) = (-1)^i. 
$$ 
Because $t_i > s_i$ we deduce that $z(t_i)-z(s_i)$ has alternate signs.
\qed%
\end{proof}
\section{$T_2$ as a power series of $T_6+2$}\label{t2t6}
Looking for identities in the vectorial space $\RR[T_6] + T_2 \cdot
\RR[T_6]$, we show first some relation between $T_2$ and $T_6$.
\begin{lemma}\label{t1t3}
For $t \in [-1,1]$, we have 
$$
T_2+2 =4\sin^2 \left (\Frac 13   \arcsin \sqrt{\Frac{T_6+2}4} \right )
% = \phi\left (\Frac{T_6+2}4 \right )
. 
$$
\end{lemma}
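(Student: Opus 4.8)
The plan is to verify the identity by expressing everything in terms of the angle $\theta$ with $t = 2\cos\theta$ and then reducing it to a statement about $\arcsin$ of a triple angle. First I would set $t = 2\cos\theta$ with $\theta \in [0,\pi]$ (this covers $t \in [-2,2]$, in particular $t \in [-1,1]$), so that by Definition~\ref{tn} we have $T_2 = 2\cos 2\theta$ and $T_6 = 2\cos 6\theta$. Then $T_6 + 2 = 2\cos 6\theta + 2 = 4\cos^2 3\theta$, hence $\sqrt{(T_6+2)/4} = |\cos 3\theta|$. Similarly $T_2 + 2 = 4\cos^2\theta$, so the claimed identity becomes
\[
4\cos^2\theta = 4\sin^2\!\left(\tfrac13 \arcsin|\cos 3\theta|\right),
\]
i.e. we must show $\cos\theta = \pm\sin\!\left(\tfrac13 \arcsin|\cos 3\theta|\right)$, or after squaring, $\sin\!\left(\tfrac13\arcsin|\cos 3\theta|\right) = |\cos\theta|$ on the relevant range.

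The key step is to invert this: writing $\cos\theta = \sin\psi$ where $\psi = \pi/2 - \theta$, we have $\cos 3\theta = \sin(3\psi)$ (since $\cos 3\theta = \cos(3(\pi/2-\psi)) = \cos(3\pi/2 - 3\psi) = -\sin 3\psi$... ) — so I would be careful with the sign here and instead use the cleaner route: from $\sin^2\alpha = (T_2+2)/4$ with $\alpha = \pi/2 - \theta$ one gets, via the triple-angle formula $\sin 3\alpha = 3\sin\alpha - 4\sin^3\alpha$, that $4\sin^3\alpha - 3\sin\alpha = -\sin 3\alpha$, and then relate $\sin 3\alpha$ to $(T_6+2)/4$. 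Concretely, for $t \in [-1,1]$ we have $\theta \in [\pi/3, 2\pi/3]$, hence $\alpha = \pi/2-\theta \in [-\pi/6,\pi/6]$, so $3\alpha \in [-\pi/2,\pi/2]$ and $\arcsin(\sin 3\alpha) = 3\alpha$ exactly. Thus $\tfrac13\arcsin(\sin 3\alpha) = \alpha$, and applying $4\sin^2(\cdot)$ gives $4\sin^2\alpha = T_2+2$ — which is the desired identity provided I check that $\sin 3\alpha = \sqrt{(T_6+2)/4} = |\cos 3\theta|$, i.e. that $\sin 3\alpha \geq 0$ on this range and equals $\cos 3\theta$. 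Since $\alpha \in [-\pi/6,\pi/6]$ does allow $\sin 3\alpha < 0$, the correct reading is that the identity as stated holds after noting $|\cos 3\theta| = |\sin 3\alpha|$ and that squaring removes the sign; I would present the argument so that only $T_2+2 = 4\sin^2\alpha$ and $4\sin^2\alpha = 4\sin^2(\tfrac13\arcsin|\sin 3\alpha|)$ are needed, the latter because $|3\alpha| \le \pi/2$ forces $\arcsin|\sin 3\alpha| = |3\alpha| = 3|\alpha|$ and $\sin^2$ is even.

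The main obstacle is purely bookkeeping with signs and branch ranges of $\arcsin$: one must pin down that restricting to $t \in [-1,1]$ puts $\theta$ in $[\pi/3,2\pi/3]$ so that $3(\pi/2-\theta)$ lands in the principal branch $[-\pi/2,\pi/2]$ of $\arcsin$, which is exactly what makes the composition $\tfrac13\arcsin(\cdots)$ collapse to the identity. There is no analytic difficulty beyond the triple-angle formula $\sin 3\alpha = 3\sin\alpha - 4\sin^3\alpha$ and the half-angle computations $T_n + 2 = 4\cos^2(n\theta/2)$; once the range check is done the identity falls out immediately. I would close by remarking that this is the germ of the Stieltjes-series expansion of $T_2+2$ in powers of $T_6+2$ used in Section~\ref{pa}, since $\alpha \mapsto 4\sin^2(\tfrac13\arcsin\sqrt{\alpha/?})$ is the algebraic hypergeometric function alluded to in the introduction.
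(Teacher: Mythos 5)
Your proof is correct and follows essentially the same route as the paper: both arguments rest on the observation that $t\in[-1,1]$ forces $\theta\in[\pi/3,2\pi/3]$, so that the relevant angle ($3\alpha=3(\pi/2-\theta)$ in your notation) lies in the principal branch $[-\pi/2,\pi/2]$ of $\arcsin$ and the composition collapses; the paper packages this as $T_1=-2\sin\left(\frac13\arcsin\frac{T_3}{2}\right)$ and squares via $T_2=T_1^2-2$, $T_6=T_3^2-2$. Your handling of the absolute value $\sqrt{(T_6+2)/4}=\lvert\cos 3\theta\rvert$ is in fact slightly more careful than the paper's, which leaves the evenness of $\sin^2\circ\frac13\arcsin$ implicit.
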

\begin{proof}
Let $t \in [-1,1]$ and $x \in [\pi/3,2\pi/3]$ such that $t=2\cos x$. \\
%We have $\Frac 12 T_3 = \cos(3x) \in [-1,1]$.
We get $3x-\pi \in [0,\pi]$ and $\cos (3x -\pi)=-\Frac 12 T_3$ so
$$x=
\Frac{\pi}3+\Frac 13 \arccos \left (-\Frac{T_3}2\right ) 
= \Frac{\pi}3 + \Frac 13 \left (\Frac{\pi}2 + \arcsin \Frac{T_3}2 \right) 
= \Frac{\pi}2 + \Frac 13 \arcsin \Frac{T_3}2.$$ 
We thus have 
$$
T_1 = 2\cos \left (\Frac{\pi}2 +\Frac 13 \arcsin \Frac{T_3}2  \right
) = - 2\sin\left (\Frac 13   \arcsin \Frac{T_3}2 \right ). 
$$
We thus deduce the lemma from $T_2 = T_1^2 -2$ and $T_6 = T_3^2 -2$.
\qed%
\end{proof}
\begin{lemma}
Let $\phi(u)  = 4\sin^2 \left (\Frac 13   \arcsin \sqrt{u} \right )$. 
For $u \in [0,1]$, we have 
$$
%\Frac 49 \sum_{n \geq 1} \Frac{(3n-2)!}{n! (2n-1)!}
%\left (\Frac{4u}{27}\right )^n = 
\phi(u) = 
\sum_{n \geq 1} \phi_n u^n 
\hbox{ where }
\phi_1 = \Frac 49, \, 
\phi_{n+1} = \Frac 29 \Frac{(3n+1)(3n-1)}{(n+1)(2n+1)}\phi_n.
$$
\end{lemma}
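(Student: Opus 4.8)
The plan is to replace the transcendental definition of $\phi$ by an algebraic equation and then read off the Taylor coefficients by Lagrange inversion. Put $\theta=\frac13\arcsin\sqrt u$, so that for $u\in[0,1]$ one has $\theta\in[0,\pi/6]$ and $\sin 3\theta=\sqrt u$. Writing $s=\sin\theta$ and using $\sin 3\theta=3s-4s^3=s(3-4s^2)$, squaring gives $u=s^2(3-4s^2)^2$; since $\phi=4s^2$, i.e.\ $s^2=\phi/4$ and $3-4s^2=3-\phi$, this becomes
\[ 4u &=& \phi(3-\phi)^2 \;=\; \phi^3-6\phi^2+9\phi . \]
No branch choice causes trouble, since $s\in[0,\frac12]$ forces $3-4s^2>0$ and $s(3-4s^2)\ge0$. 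As $3\phi^2-12\phi+9=9\neq0$ at $\phi=0$, the implicit function theorem shows that $\phi$ is analytic near $u=0$ with $\phi(0)=0$, so $\phi(u)=\sum_{n\ge1}\phi_n u^n$ there.

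The next step is Lagrange inversion applied to $\phi=u\,H(\phi)$ with $H(z)=4/(3-z)^2$ and $H(0)=\frac49\neq0$, which gives $\phi_n=\frac1n\,[z^{n-1}]H(z)^n$. Expanding $(3-z)^{-2n}$ by the binomial series yields the closed form $\phi_n=\frac{3\cdot 4^n}{n\,27^n}{3n-2 \choose n-1}$; in particular $\phi_1=\frac49$, and forming the quotient $\phi_{n+1}/\phi_n$ and cancelling factorials gives exactly $\phi_{n+1}=\frac29\,\frac{(3n+1)(3n-1)}{(n+1)(2n+1)}\phi_n$.

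It remains to check that this power series represents $\phi$ on all of $[0,1]$. Along the real branch through $\phi(0)=0$ one has $\phi'=4/\bigl(3(\phi-1)(\phi-3)\bigr)>0$, so $\phi$ is analytic and increasing on $[0,1)$ and reaches $\phi=1$ precisely at $u=1$; moreover Stirling's formula gives $\phi_n=O(n^{-3/2})$, so $\sum_{n\ge1}\phi_n u^n$ converges absolutely and uniformly on $[0,1]$ to a continuous function, which coincides with $\phi$ on $[0,1)$ and hence on $[0,1]$.

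I expect the one genuinely delicate point to be this last step --- identifying the analytic branch of the cubic with $4\sin^2(\frac13\arcsin\sqrt u)$ and controlling the endpoint $u=1$; everything else is bookkeeping. Alternatively, differentiating the cubic twice and eliminating $\phi'$ produces the inhomogeneous hypergeometric-type equation $18u(1-u)\phi''+(9-18u)\phi'+2\phi=4$, whose $u^0$-coefficient gives $\phi_1=\frac49$ and whose $u^n$-coefficients for $n\ge1$ give the recurrence, at the cost of a short manipulation of polynomials in $\phi$ in place of binomial identities.
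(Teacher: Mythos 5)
Your proof is correct, but it takes a genuinely different route from the paper's. The paper never leaves the transcendental definition: it writes $\phi = 2 - 2\cos\left(\frac23\arcsin\sqrt u\right)$, differentiates twice, eliminates the two trigonometric quantities from the resulting linear system, and obtains the second-order equation $18(u-u^2)\phi'' + 9(1-2u)\phi' + 2\phi = 4$ (the alternative you sketch at the end is exactly this); the recurrence is then read off coefficient by coefficient. You instead pass immediately to the algebraic equation $4u=\phi(3-\phi)^2$ --- which the paper states only in a remark --- and apply Lagrange inversion to $\phi=uH(\phi)$ with $H(z)=4/(3-z)^2$. Your route buys more: an explicit closed form $\phi_n=\frac{3\cdot 4^n}{n\,27^n}{3n-2 \choose n-1}$ (hence positivity of all $\phi_n$ for free, which the paper needs later anyway), a clean justification via the implicit function theorem that $\phi$ is analytic at $u=0$ (the paper simply asserts that ``$\phi$ has a power series expansion''), and an honest treatment of convergence on the closed interval $[0,1]$ via $\phi_n=O(n^{-3/2})$, a point the paper glosses over. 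What the paper's route buys is the differential equation itself, which it reuses in a later remark to identify $\phi=2-2F(1/3,-1/3,1/2;u)$ as a hypergeometric function. The one place you are terse is the branch identification: to nail it down, observe that $\phi\mapsto\phi(3-\phi)^2/4$ is a strictly increasing bijection of $[0,1]$ onto itself, so for each $u\in[0,1]$ the cubic has a unique root in $[0,1]$, and the trigonometric expression --- which lies in $[0,1]$ and satisfies the cubic --- must be that root; you flag this as the delicate point yourself and all the needed ingredients are already in your argument.
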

\begin{proof}
%Let $u= \Frac 14 T_3^2$ and $v=T_1^2 = t^2.$ 
We have 
$\phi(u)  
% = 4 \sin^2 \left ( \Frac 13 \arcsin \sqrt u \right )
= 2 - 2 \cos \left ( \Frac 23 \arcsin \sqrt u \right )
.$
We deduce that 
\[\label{sysd}
\left \{
\begin{array}{rcl}
\phi(u) 
&=& - 2 A+2\\[5pt]
\Frac {d}{d{u}}\phi(u) 
&=& \Frac 23 B\\[5pt]
\Frac{d^{2}}{d{u}^{2}}\phi(u) &=&
- \Frac 29 \Frac 1{(u-u^2)}  A+
\Frac 13 \Frac{(2u-1)}{(u-u^2)} B
\end{array}
\right .
\]
where 
$A=
\cos \left (
  \Frac 23 \arcsin \sqrt u \right )$ and
$B=\Frac{\sin \left (\Frac 23 \arcsin \sqrt u \right )}{\sqrt{u-u^2}}$.
\pn
Eliminating $A$ and $B$ from system (\ref{sysd}), we find that
\[\label{ode}
-4+2\,\phi \left( u \right) + 9 \left (1-2\,u \right) {\frac {d}{du}}\phi
 \left( u \right) + 18\left( u-u^2 \right ) 
{\frac {d^{2}}{d{u}^{2}}}
\phi \left( u \right)  = 0. 
\]
$\phi$ has a power series expansion and we get from (\ref{ode})
$$
\phi_0 = 0 , \, \phi_1 = \Frac 49, \, 
\phi_{n+1} = \Frac 29 \Frac{(3n+1)(3n-1)}{(n+1)(2n+1)}\phi_n.
$$
\qed%
\end{proof}
\begin{remark}
There is no need to know explicitely $\phi$ with the lemma
{\bf \ref{t1t3}}. One can see from $4u=v(v-3)^2$ 
that $\phi$ is an algebraic
function. It is therefore the solution of a differential equation we
can find using Euclid algorithm. Recursion formula for the $\phi_n$
and the differential equation can be easely obtained using 
the {\sc Maple} package {\tt gfun} (see \cite{SZ}).
\end{remark}
\begin{definition}
Let $\Delta$ defined by $\Delta f_n = f_{n+1}-f_{n}$. We say that 
$f_n$ is totally monotone when 
for every integer $k$ and every $n \geq 1$, we have
$$(-1)^k \Delta^k f_n > 0.$$ 
\end{definition}
\begin{example}\label{ex1}
\tiret Let $f_n=\exp(-n)$. We get $(-1)^k \Delta^k f_n = f_n (1-1/e)^k$.
\pn
\tiret Let $f_n = \Frac 1n$. We get $(-1)^k \Delta^k f_n = f_n \Frac
1{{n+k \choose k}}.$
\pn They are both totally monotone.
\end{example}

\def\NN{{\rm I\!N}} %natuerliche Zahlen
\begin{proposition}
$\phi_n$ is totally monotone. 
\end{proposition}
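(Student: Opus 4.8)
The plan is to exhibit $(\phi_n)_{n\geq1}$ as a Hausdorff moment sequence: I will produce a positive measure $\mu$ on $[0,1]$, charging the open interval $(0,1)$, with $\phi_n=\int_0^1 t^{\,n-1}\,d\mu(t)$ for every $n\geq1$. Granting this, the proposition is immediate: an easy computation gives $\Delta^k\phi_n=\int_0^1 t^{\,n-1}(t-1)^k\,d\mu(t)$, hence
$$(-1)^k\Delta^k\phi_n=\int_0^1 t^{\,n-1}(1-t)^k\,d\mu(t)>0\qquad(k\geq0,\ n\geq1),$$
since $t^{\,n-1}(1-t)^k>0$ on $(0,1)$ and $\mu\big((0,1)\big)>0$.

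To build $\mu$, I would first solve the recursion. Since $(n-1/3)(n+1/3)=\Frac19(3n-1)(3n+1)$ and $(n+1)(n+1/2)=\Frac12(n+1)(2n+1)$, the recursion reads $\phi_{n+1}/\phi_n=\Frac{(n-1/3)(n+1/3)}{(n+1)(n+1/2)}$, so from $\Gamma(z+1)=z\Gamma(z)$ one checks by induction (the value at $n=1$ and the consecutive ratio both match) that
$$\phi_n=\Frac{4\,\Gamma(3/2)}{9\,\Gamma(2/3)\,\Gamma(4/3)}\cdot\Frac{\Gamma(n-1/3)\,\Gamma(n+1/3)}{\Gamma(n+1)\,\Gamma(n+1/2)},\qquad n\geq1.$$
Then I would recognise each of the two $\Gamma$-quotients as a Beta integral. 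By Euler's formula $B(p,q)=\int_0^1 t^{p-1}(1-t)^{q-1}\,dt=\Gamma(p)\Gamma(q)/\Gamma(p+q)$ $(p,q>0)$, for every $n\geq1$,
$$\Frac{\Gamma(n+1/3)}{\Gamma(n+1)}=\Frac1{\Gamma(2/3)}\int_0^1 t^{\,n-1}g(t)\,dt,\qquad \Frac{\Gamma(n-1/3)}{\Gamma(n+1/2)}=\Frac1{\Gamma(5/6)}\int_0^1 t^{\,n-1}h(t)\,dt,$$
with $g(t)=t^{1/3}(1-t)^{-1/3}$ and $h(t)=t^{-1/3}(1-t)^{-1/6}$, both positive on $(0,1)$ and integrable (of integrals $B(4/3,2/3)$ and $B(2/3,5/6)$); thanks to $n\geq1$ every exponent occurring stays $>-1$. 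Multiplying these two identities, $\phi_n$ becomes a positive constant times $\int_{(0,1)^2}(ts)^{\,n-1}g(t)h(s)\,dt\,ds$, that is $\int_0^1 r^{\,n-1}\,d\mu(r)$, where $\mu$ is the image of a positive multiple of $g(x)h(y)\,dx\,dy$ under $(x,y)\mapsto xy$ — a positive measure on $[0,1]$ with $\mu\big((0,1)\big)>0$. This is the required representation.

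I do not foresee a genuine difficulty; the only things to watch are the bookkeeping with the fractional parameters (so that each Beta integral converges) and noting that $\mu$ does charge $(0,1)$. If one prefers to avoid measures in the last step, the finite-difference Leibniz rule $\Delta^k(ab)_n=\sum_{j=0}^k{k\choose j}(\Delta^ja)_n(\Delta^{k-j}b)_{n+j}$ shows directly that a product of totally monotone sequences is totally monotone, so it then suffices to see that the two Beta-integral factors above are each totally monotone. A more "Stieltjes-minded" variant, closer to Section {\bf\ref{pa}}, would instead use the algebraic relation $4u=v(v-3)^2$ for $v=\phi$ to check that $\phi(u)/u$ continues analytically to $\RR\setminus[1,\infty)$ with a nonnegative jump across the cut, giving $\phi(u)/u=\int_1^\infty(s-u)^{-1}\,d\nu(s)$ by Stieltjes--Perron inversion and hence the same moment representation; I would keep the self-contained Beta-integral argument as the main line, since that variant relies on the complex-analytic machinery only introduced later.
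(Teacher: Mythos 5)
Your proof is correct, but it takes a genuinely different route from the paper's. The paper argues directly from the recursion: it shows by induction that $(-1)^k\Delta^k\phi_n=\phi_n P_k(n)/\bigl((n+1)\cdots(n+k)\cdot(2n+1)\cdots(2n+2k-1)\bigr)$ for an explicit sequence of degree-$k$ polynomials $P_k$ satisfying $P_{k+1}(n)=(n+k+1)(2n+2k+1)P_k(n)-2(n^2-1/9)P_k(n+1)$, and then runs a second induction on the sign pattern $(-1)^iP_k(-i)>0$, $i=0,\dots,k$, to locate all $k$ roots of $P_k$ in $]-k,0[$ and conclude $P_k(n)>0$ for integers $n\geq 1$. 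You instead solve the recursion in closed form, $\phi_n=C\,\Gamma(n-1/3)\Gamma(n+1/3)/(\Gamma(n+1)\Gamma(n+1/2))$, split the two $\Gamma$-quotients into Beta integrals, and obtain an explicit Hausdorff moment representation $\phi_n=\int_0^1 t^{n-1}\,d\mu(t)$ with $\mu$ a positive measure charging $(0,1)$; total monotonicity is then the one-line identity $(-1)^k\Delta^k\phi_n=\int_0^1t^{n-1}(1-t)^k\,d\mu(t)>0$. Your computations check out: the ratio $(n-1/3)(n+1/3)/((n+1)(n+1/2))$ matches the recursion, the value at $n=1$ is $4/9$, and all Beta exponents stay $>-1$ for $n\geq 1$. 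The paper's argument is elementary and self-contained but rests on a somewhat delicate double induction; yours costs the closed form and the Beta function but buys more. Indeed, since $\mu$ has a density on $(0,1)$ (hence infinite support), the quadratic forms $\sum_{i,j}c_ic_j\phi_{n+i+j}=\int_0^1t^{n-1}\bigl(\sum_ic_it^i\bigr)^2\,d\mu(t)$ are positive definite, so the Hankel determinants of Definition~{\bf\ref{stieltjes}} are strictly positive and you obtain the Stieltjes property of $\phi$ directly, without invoking the Hausdorff--Stieltjes equivalence for non-rational series that the paper needs in the theorem that follows.
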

\begin{proof} 
We will show that $$(-1)^k \Delta^k \phi_n = \phi_n \Frac{P_k(n)}{(n+1)
  \cdots (n+k) \cdot  (2n+1)\cdots (2n+2k-1)}>0.$$
\tiret
We get 
$$\Delta \phi_n = \phi_{n+1} - \phi_n = \phi_n \left (
\Frac{2
  (3n-1)(3n+1)}{9(n+1)(2n+1)} - 1\right ) =
- \phi_n \Frac{3 n+11/9}{(n+1)(2n+1)}. $$
Suppose now that
$(-1)^k \Delta^k\phi_n = \phi_n \Frac{P_k(n)}{(n+1)
  \cdots (n+k) \cdot  (2n+1)\cdots (2n+2k-1)}.$
We thus deduce
\begin{eqnarray*}
(-1)^{k+1} \Delta^{k+1}\phi_n &=& 
-\Delta \left [
\phi_n \Frac{P_k(n)}{(n+1)
  \cdots (n+k) \cdot  (2n+1)\cdots (2n+2k-1)}
\right ]\nonumber\\
&=& 
\phi_n \Frac{P_k(n)}{(n+1) \cdots (n+k) \cdot  (2n+1)\cdots (2n+2k-1)}
- \nonumber\\
&&\quad
\phi_{n+1} \Frac{P_k(n+1)}
{(n+2) \cdots (n+k+1) \cdot  (2n+3)\cdots (2n+2k+1)}\nonumber\\
&=&
\phi_n \Frac{(n+k+1)(2n+2k+1) P_k(n) - 2(n^2-1/9) P_{k}(n+1)}
{(n+1) \cdots (n+k+1) \cdot  (2n+1)\cdots (2n+2(k+1)-1)}.
\end{eqnarray*}
We thus obtain 
$$
(-1)^k \Delta^k \phi_n = \phi_n \Frac{P_k(n)}{(n+1)
  \cdots (n+k) \cdot  (2n+1)\cdots (2n+2k-1)},
$$
where $P_0  = 1$ and 
$$
P_{k+1} (n) = {(n+k+1)(2n+2k+1) P_k(n) - 2(n^2-1/9) P_{k}(n+1)}. 
$$
%\pn
\tiret
We will show now by induction that $P_k = a_k X^k + \cdots + a_0$
where $a_k >0$.  Suppose it is true for a given $k$, we thus deduce that
\[
P_{k+1} &=& (X+k+1)(2X+2k+1) (a_k X^k + a_{k-1} X^{k-1} + \cdots ) -
\nonumber\\
&&\quad \quad 
2(X^2-1/9) (a_k X^k + (a_{k-1} + k a_k) X^{k-1} + \cdots ) \nonumber\\
&=& 2 a_k X^{k+2} + \left ((4k+3) a_k + 2 a_{k-1}) \right ) X^{k+1} + \cdots 
- \nonumber\\
&&\quad \quad 
\left [ 2 a_k X^{k+2} + (2 a_{k-1} + 2 k a_k) X^{k+1} + \cdots \right
] \nonumber\\ 
&=&  
(2k+3) a_k X^{k+1} + \cdots . 
\]
$P_k$ is a polynomial of degree $k$ whose leading coefficient is 
$1 \cdot 3 \cdots (2k+1)$.
\def\cP{{\cal P}}
\pn
\tiret
Let us prove now by induction the following
$$
(-1)^i P_k(-i) > 0, \, i = 0, \ldots, k.
$$
This is true for $k=0$.
\pn
Suppose now it is true for $P_k$. Intermediate values theorem says that
$P_k$ has exactly $k$ real roots in $]-k,0[$, so
$P_k (x) >0$ when $x\geq 0$ or when $x+k\leq 0$.
\pn
Let us compute
$$
P_{k+1}(0) = (k+1) (2k+1) P_{k}(0) + 2/9 P_k(1) > 0 
$$
For $i = 1, \ldots, k$~: 
$$
(-1)^i P_{k+1}(-i) = (k-i+1)(2(k-i)+1) (-1)^i P_k (i) + 2 (i^2 - 2/9)
(-1)^{i-1} P_k(-(i-1)) > 0. 
$$
For $i = - (k+1)$ we get
$$
(-1)^{k+1} P_{k+1}(-(k+1)) = 0 - 2 ((k+1)^2 - 2/9) (-1)^{k+1} P_k(-k) > 0
$$
We thus deduce that $(-1)^i P_{k+1}(-i) >0$ for $i = 0,
\ldots , k+1$. 
\pn
\tiret
We thus deduce that $P_k$ has exactly $k$ roots in $]-k,0[$ so
$P_k(n)$ is nonnegative for any integer $n$. 
\qed%
\end{proof}
\begin{definition}\label{stieltjes}
$f(z) = \sum_{n\geq 1} f_n z^n$ is a Stieltjes series
if for every $n \geq 1$ and $m \geq 0$, one has
$$
\left | 
\begin{array}{cccc}
f_n & f_{n+1} & \cdots & f_{n+m}\\
f_{n+1}& f_{n+2} & \cdots & f_{n+m+1}\\
\vdots & & & \vdots\\
f_{n+m} & f_{n+m+1} & \ldots & f_{n+2m}
\end{array}
\right |>0.
$$
\end{definition}
\begin{remark}
This last condition is related to the problem of Hamburger moments. It
is the Stieltjes condition. 
The totally monotonicity is related to the
Hausdorff condition (see \cite{Ha}).
\end{remark}
\def\d{\hbox{\rm d}\,}
The Hausdorff condition and the Stieltjes condition are equivalent 
if the series is not a rational function (see \cite{BG}, p. 194 and the proof of
Sch{\"o}nberg, \cite{Wa}, p. 267 or \cite{Sc}).
We thus deduce that
\begin{theorem}
$\phi(z)= \sum_{n\geq 1} \phi_n z^n$ is a Stieltjes series.
\end{theorem}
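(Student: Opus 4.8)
The plan is to obtain the theorem by combining two ingredients that are now essentially in place: the total monotonicity of $(\phi_n)$, proved in the proposition above (the Hausdorff condition), and the equivalence --- recalled just before the statement --- between the Hausdorff condition and the Stieltjes condition for series that are \emph{not} rational functions (\cite{BG}, p.~194; Sch\"onberg, \cite{Wa}, p.~267 or \cite{Sc}). So the only thing left to verify, beyond quoting the proposition, is that $\phi(z)=\sum_{n\ge1}\phi_n z^n$ is not a rational function.

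For the non-rationality I would argue from the algebraic equation for $\phi$ noted in the remark. Setting $v=\phi(z)$, Lemma~\ref{t1t3} together with the substitution $4z=v(v-3)^2$ shows that $v$ is a root of $v^3-6v^2+9v-4z\in\RR(z)[v]$. This cubic has no root in $\RR(z)$: if $v=p/q$ with $p,q\in\RR[z]$ coprime, then $p^3=q(6p^2-9pq+4zq^2)$ forces $q\mid p^3$, hence $q$ constant, so $v=p(z)$ would be a polynomial satisfying $p^3-6p^2+9p=4z$; comparing degrees in $z$ on the two sides gives $3\deg p=1$ if $\deg p\ge1$, and a constant on the left if $\deg p=0$, a contradiction in either case. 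Therefore $\phi$ is algebraic of degree $3$ over $\RR(z)$ and in particular $\phi\notin\RR(z)$. (Alternatively, the map $v\mapsto v(v-3)^2$ is critical at $v=1$ with critical value $z=1$, so the branch of $\phi$ normalized by $\phi(0)=0$ has a genuine branch point at $z=1$ and is not meromorphic on all of $\mathbf{C}$, hence not rational.)

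With these two facts, the theorem follows immediately: applying the cited equivalence (Sch\"onberg's theorem) to the totally monotone, non-rational sequence $(\phi_n)$, every Hankel determinant
$$\det\bigl(\phi_{n+i+j}\bigr)_{0\le i,j\le m}$$
is strictly positive for all $n\ge1$ and $m\ge0$, which is exactly the Stieltjes condition of Definition~\ref{stieltjes}. The main point requiring care is lining up the precise form of the quoted equivalence so that it delivers \emph{strict} positivity of the Hankel determinants (consistent with the strict inequalities in the total monotonicity proved above) under the non-rationality hypothesis just checked; a self-contained alternative would be to establish positivity of those determinants directly, but that is considerably longer and the cited rigidity results make it unnecessary.
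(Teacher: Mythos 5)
Your proof is correct and follows essentially the same route as the paper: total monotonicity of $(\phi_n)$ plus the cited equivalence of the Hausdorff and Stieltjes conditions for non-rational series, with non-rationality deduced from the algebraic relation $4z=\phi(\phi-3)^2$. Your degree-comparison argument for non-rationality is a minor variant of (and slightly more explicit than) the paper's divisibility argument, but the substance is identical.
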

\begin{proof}
$\phi(u)$ is an algebraic function that satisfies 
$4u=\phi(\phi-3)^2$. Suppose that $\phi = p/q$ where $p(u)$ and $q(u)$
are relatively prime polynomials in $u$, then we would have 
$4uq^3-p^3+6p^2q-9pq^2 =0$ and $p$ would divide $u$ and $q$
would divide $1$. We would have $\phi(u) = \lambda u$ and it is not
the case. Thus $\phi$ is not a rational function and is therefore a Stieltjes function. 
\qed%
\end{proof}
\begin{remark}
In example (\ref{ex1}), the sequence $\exp(-n)$ is totally monotonic. But 
$\sum_n \exp(-n) z^n = \Frac 1{1-e\cdot z}$ is a rational function and the 
condition (\ref{stieltjes}) does not hold. 
\end{remark}
\begin{remark}
$\phi(u)= 2-2 F(1/3,-1/3,1/2;u)$ where $F(a,b,c;z)$ is the hypergeometric
function. It results from eq. (\ref{ode}) that is known as the
hypergeometric equation (\cite{BG,Wa})
$$
\left( u-u^2 \right ) {\frac {d^{2}}{d{u}^{2}}} f\left( u \right)  
+ \left (c -(1+a+b)\,u \right) {\frac {d}{du}}f \left( u \right)
- ab \,f \left( u \right) 
= 0. 
$$
for $\phi - 2 = -2f$, $a=-b=\Frac 13, c=\Frac 12$.
\end{remark}
\section{Pad{\'e} approximation}\label{pa}
Rational approximations of Stieltjes series have remarkable
properties. Let us remind the following construction of Pad{\'e} approximants:
\begin{theorem}[Pad{\'e} approximant]\label{pade-t}
Let $f(x) = \sum_{k \geq 1} f_k x^k$ be a Stieltjes series and consider two
integers $m\leq n$. 
There is a unique solution $(P_n, Q_m) \in \RR_n[x] \times
\RR_{m}[x]$, such that  
\begin{equation}
Q_m(0)=1,\, P_n - f Q_{m} = 0 \mod{x^{n+m+1}}.\label{pade}
\end{equation}
Furthermore we have $\deg P_n=n$ and $\deg Q_m=m$. 
\end{theorem}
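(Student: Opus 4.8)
The plan is to convert condition (\ref{pade}) into a square linear system and read off both its solvability and the sharp degrees $\deg P_n=n$, $\deg Q_m=m$ from the Hankel determinants appearing in definition {\bf \ref{stieltjes}}.

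I would write $Q_m(x)=1+q_1x+\cdots+q_mx^m$ and $P_n(x)=p_0+\cdots+p_nx^n$. Once $Q_m$ is fixed, the equations coming from the coefficients of $x^0,\ldots,x^n$ in $P_n-fQ_m$ simply define $p_0,\ldots,p_n$ as the low-order coefficients of $fQ_m$ (and $f_0=0$ forces $p_0=0$). So (\ref{pade}) is equivalent to the vanishing of the coefficients of $x^{n+1},\ldots,x^{n+m}$ in $fQ_m$, that is, to the $m\times m$ system
$$
\sum_{i=1}^m f_{n+j-i}\,q_i=-f_{n+j},\qquad j=1,\ldots,m,
$$
in the unknowns $q_1,\ldots,q_m$; here all indices $n+j-i$ lie between $n+1-m$ and $n+m-1$, and $n+1-m\geq1$ precisely because $m\leq n$. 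Write $H_k^{(p)}=\det\bigl(f_{p+a+b}\bigr)_{0\leq a,b\leq k-1}$ for the $k\times k$ Hankel determinant with base index $p$, so that $H_k^{(p)}>0$ for every $p\geq1$ by definition {\bf \ref{stieltjes}}. Reversing the order of its columns turns the matrix of the system above into $\bigl(f_{n-m+1+a+b}\bigr)_{0\leq a,b\leq m-1}$, of determinant $\pm H_m^{(n-m+1)}\neq0$. Hence the system has a unique solution, and $(P_n,Q_m)$ exists and is unique.

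For the degrees I would use Cramer's rule. Solving for $q_m$ gives, up to sign, $q_m=\pm H_m^{(n-m+2)}\big/H_m^{(n-m+1)}$; both base indices are $\geq1$, so both determinants are positive and $q_m\neq0$, i.e. $\deg Q_m=m$. For $P_n$, note that $p_n$ is the coefficient of $x^n$ in $fQ_m$, so adjoining $p_n-\sum_{i=1}^m f_{n-i}q_i=f_n$ to the previous system gives an $(m+1)\times(m+1)$ system whose Cramer formula for $p_n$ reads, up to sign, $p_n=\pm H_{m+1}^{(n-m)}\big/H_m^{(n-m+1)}$. If $n>m$ the numerator has base index $\geq1$ and is positive by definition {\bf \ref{stieltjes}}. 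If $n=m$ the numerator is $\det(f_{a+b})_{0\leq a,b\leq m}$, whose corner entry $f_0$ vanishes; here I would split off the border vector $v=(f_1,\ldots,f_m)^\top\neq0$ and the $m\times m$ Hankel block $\mathcal B=(f_{i+j})_{1\leq i,j\leq m}$, which is positive definite because all its leading principal minors are Stieltjes determinants of base index $\geq1$, and compute via the Schur complement $\det(f_{a+b})_{0\leq a,b\leq m}=-\det(\mathcal B)\,v^\top\mathcal B^{-1}v<0$. Either way $p_n\neq0$, so $\deg P_n=n$.

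The bookkeeping that matches each minor with a Hankel determinant of $f$ is routine; the real content is the sharpness of the degrees, which is false for an arbitrary power series (the Padé table can have degenerate blocks) and holds here only thanks to the strict positivity of every Hankel determinant of a Stieltjes series. The one genuinely delicate point is the diagonal case $n=m$, where the Hankel determinant controlling $p_n$ formally involves the vanishing coefficient $f_0$ and its non-vanishing has to be obtained indirectly, by the Schur-complement computation above — equivalently, by passing to the series $f(x)/x$, all of whose Hankel determinants, constant term included, are positive.
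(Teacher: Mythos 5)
Your proof is correct and follows essentially the same route as the paper's: reduce (\ref{pade}) to an $m\times m$ Hankel system for $q_1,\ldots,q_m$, invertible by Definition {\bf\ref{stieltjes}} since the base index $n-m+1$ is $\geq 1$, and then read off $q_m\neq 0$ and $p_n\neq 0$ from further Hankel determinants (you do this via Cramer, the paper via ``if this coefficient vanished the whole solution vector would vanish,'' which amounts to the same determinants). The one place you genuinely add something is the diagonal case $n=m$: there the $(m+1)\times(m+1)$ determinant controlling $p_n$ has base index $0$ and contains $f_0=0$, so it is not covered by Definition {\bf\ref{stieltjes}}, and the paper's terse ``we thus deduce that $p_n\neq 0$'' silently skips this; your Schur-complement computation (equivalently, passing to $f(x)/x$) is exactly what is needed to close that gap, and the case does occur in the application, since Proposition {\bf\ref{cn}} uses $C_{k,k}$.
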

\begin{proof}
Let us write 
$$
P_n = p_0 + \cdots + p_n x^n, \, Q_m = q_0 + q_1 x + \cdots + q_m x^n. 
$$
Eq. (\ref{pade}) gives 
\begin{eqnarray}
&&\left \{
\begin{array}{rcl}
p_0 &=& f_0, \\
p_1 &=& f_0 q_1+f_1 q_0\\
& \vdots & \\
p_n & = & f_{n-m} q_m +f_{n-m+1} q_{m-1} + \cdots + f_{n}q_0,
\end{array}\right . \label{nm}\\
&&
\left \{
\begin{array}{rcl}
0 &= & f_{n-m+1} q_{m} + f_{n-m+2} q_{m-1} + \cdots + f_{n+1} q_0  \\
0 &= & f_{n-m+2} q_{m} + f_{n-m+3} q_{m-1} + \cdots + f_{n+2} q_0  \\
&\vdots& \\
0 &=& f_{n} q_m + f_{n+1} q_{m-1} + \cdots + f_{m+n} q_0.
\end{array}\right .\label{mm}
\end{eqnarray}
The last $m \times m$ system (\ref{mm}) is 
\begin{eqnarray}
\left ( 
\begin{array}{cccc}
f_{n-m+1} & f_{n-m+2} & \cdots & f_{n}\\
f_{n-m+2}& f_{n-m+3} & \cdots & f_{n+1}\\
\vdots & & & \vdots\\
f_{n} & f_{n+1} & \ldots & f_{m+n-1}
\end{array}
\right )
\left (
\begin{array}{c}
q_m \\
q_{m-1}\\
\vdots\\
q_1
\end{array}
\right )
=
-q_0 \left (
\begin{array}{c}
f_{n+1} \\
f_{n+2}\\
\vdots\\
f_{m+n}
\end{array}
\right )
\label{sys}
\end{eqnarray}
and therefore has a unique solution because $f$ is a Stieltjes series
and $q_0=1$. 
The first system (\ref{nm}) is then solved
for $p_0, \ldots, p_n$. 
\pn 
\tiret
System (\ref{mm}) may be also written 
\begin{eqnarray*}
\left ( 
\begin{array}{cccc}
f_{n-m+2} & f_{n-m+3} & \cdots & f_{n+1}\\
f_{n-m+3}& f_{n-m+4} & \cdots & f_{n+2}\\
\vdots & & & \vdots\\
f_{n+1} & f_{n+2} & \ldots & f_{m+n}
\end{array}
\right )
\left (
\begin{array}{c}
q_{m-1} \\
q_{m-2}\\
\vdots\\
q_0
\end{array}
\right )
=
-q_m \left (
\begin{array}{c}
f_{n-m+1} \\
f_{n-m+2}\\
\vdots\\
f_{n}
\end{array}
\right ). 
\end{eqnarray*}
We thus deduce that if $q_m=0$ then $Q_m=0$ and $Q_m(0)=0$. 
\pn
\tiret
With the last equation of (\ref{nm}) 
and (\ref{mm}), we have 
\begin{eqnarray*}
\left ( 
\begin{array}{cccc}
f_{n-m} & f_{n-m+1} & \cdots & f_{n}\\
f_{n-m+1}& f_{n-m+2} & \cdots & f_{n+1}\\
\vdots & & & \vdots\\
f_{n} & f_{n+1} & \ldots & f_{m+n}
\end{array}
\right )
\left (
\begin{array}{c}
q_m \\
q_{m-1}\\
\vdots\\
q_0
\end{array}
\right )
=
\left (
\begin{array}{c}
p_n \\
0\\
\vdots\\
0
\end{array}
\right ) .
\end{eqnarray*}
We thus deduce that $p_n \not =0$. 
\qed%
\end{proof}
\begin{remark}
The system (\ref{sys}) shows that if $Q_m(0)=0$, then $Q_m=0$.
\end{remark}
\begin{definition}
We say that 
$f^{[n/m]} = P_n/Q_m$ is the Pad{\'e} approximant of order $(n,m)$ of $f$. 
\end{definition}
We will make use of a very useful theorem concerning Stieltjes
series.   
\begin{theorem} \label{bmt}
Let $f(x)$ be a Stieltjes series with radius of convergence $R$ and let
us denote by $f^{[n/m]}$ its Pad{\'e} approximant $P_n/Q_m$. Then 
\begin{enumerate}
\item \label{poles}
$Q_m$ has exactly $m$ real roots in $]R, + \infty [.$
\item Let $f^{[n/m]}(x) = \sum_{k \geq 1} f_k^{[n/m]} x^k$. We have
\begin{enumerate}
\item for $1 \leq k \leq n+m$, $0 < f_k^{[n/m]} = f_k$. \label{2a}
\item $0 \leq f_{n+m+1}^{[n/m]} < f_{n+m+1}$.\label{2b}  
\item for $k \geq n+m+1$, $0 \leq f_k^{[n/m]} \leq f_k$.\label{2c}  
\end{enumerate}
\end{enumerate}
\end{theorem}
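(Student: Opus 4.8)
The plan is to read the linear systems that define the Pad\'e approximant as orthogonality relations for a positive measure, and then to run the classical Stieltjes--Pad\'e theory over that measure. Since $f$ is a Stieltjes series, the positivity of the Hankel determinants of Definition~\ref{stieltjes} is exactly the condition for the Stieltjes moment problem to be solvable with infinitely many points of increase: there is a positive measure $\gamma$ on $[0,+\infty[$ with $f_k=\int u^{k-1}\,d\gamma(u)$ for every $k\ge1$ (the case $m=0$ of Definition~\ref{stieltjes} moreover records $f_k>0$). Since $f$ has radius of convergence $R$, one has $\limsup_k f_k^{1/k}=\sup(\mathrm{supp}\,\gamma)$, hence $\mathrm{supp}\,\gamma\subseteq[0,1/R]$ with $\sup(\mathrm{supp}\,\gamma)=1/R$, and $f(x)=\int_0^{1/R}x\,d\gamma(u)/(1-xu)$.

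For item~\ref{poles}, put $\hat Q_m(u)=u^mQ_m(1/u)=u^m+q_1u^{m-1}+\cdots+q_m$, which has degree exactly $m$ because its constant term $q_m$ is the leading coefficient of $Q_m$, nonzero by Theorem~\ref{pade-t}. Substituting $f_k=\int u^{k-1}\,d\gamma(u)$ in the homogeneous block~(\ref{mm}) and reading it row by row turns it into $\int_0^{1/R}u^i\,\hat Q_m(u)\,u^{n-m}\,d\gamma(u)=0$ for $i=0,\ldots,m-1$; here $n-m\ge0$ is used so that $u^{n-m}\,d\gamma(u)$ is again a positive measure, with infinite support in $[0,1/R]$ and finite moments. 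Thus $\hat Q_m$ is the monic degree-$m$ orthogonal polynomial of $u^{n-m}\,d\gamma$, so it has $m$ distinct real zeros, all in $]0,1/R[$. As $Q_m(0)=1$, the zeros of $Q_m$ are exactly the reciprocals of those of $\hat Q_m$: they are $m$ distinct reals in $]R,+\infty[$, and they exhaust the roots since $\deg Q_m=m$.

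Item~\ref{2a} is immediate: by~(\ref{pade}) and $Q_m(0)=1$ the function $f^{[n/m]}=P_n/Q_m$ is analytic at $0$ with $f^{[n/m]}(x)-f(x)={\cal O}(x^{n+m+1})$, so $f_k^{[n/m]}=f_k>0$ for $1\le k\le n+m$. For items~\ref{2b} and~\ref{2c} the plan is to invoke the classical Stieltjes--Pad\'e error representation (see \cite{BG}): there is a positive constant $c_{n,m}$ with
$$
f(x)-f^{[n/m]}(x)=\frac{c_{n,m}\,x^{n+m+1}}{Q_m(x)^2}\int_0^{1/R}\frac{u^{n-m}\,\hat Q_m(u)^2}{1-xu}\,d\gamma(u).
$$
Now expand the factors on the right in powers of $x$: the integral equals $\sum_{j\ge0}x^j\int u^{n-m+j}\hat Q_m(u)^2\,d\gamma(u)$, with all coefficients positive, while by item~\ref{poles} $Q_m(x)=\prod_{i=1}^m(1-x/r_i)$ with every $r_i>R>0$, so $1/Q_m(x)$, and a fortiori $1/Q_m(x)^2$, has non-negative Taylor coefficients and constant term $1$. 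Hence $f(x)-f^{[n/m]}(x)=\sum_{k\ge n+m+1}(f_k-f_k^{[n/m]})x^k$ has non-negative coefficients, its coefficient of $x^{n+m+1}$ being $c_{n,m}\int u^{n-m}\hat Q_m(u)^2\,d\gamma(u)>0$; this yields $f_k^{[n/m]}\le f_k$ for $k\ge n+m+1$ and the strict inequality at $k=n+m+1$. The remaining lower bound $f_k^{[n/m]}\ge0$ holds because $f^{[n/m]}$ is itself a Stieltjes function: its partial-fraction expansion over the simple poles $r_i>R$ has non-negative residues and non-negative polynomial part (again part of the standard Stieltjes--Pad\'e theory, \cite{BG}), so all its Taylor coefficients are $\ge0$.

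The main obstacle is assembling the two external inputs from \cite{BG}: the error representation displayed above, whose proof is exactly where the orthogonality of item~\ref{poles} does its work (it forces the right-hand side to vanish to order $n+m+1$, the pole structure and the single normalization constant $c_{n,m}$ being routine), and the statement that a Pad\'e approximant of a Stieltjes series is again a Stieltjes function. Granting these, everything reduces to elementary manipulation of power series with non-negative coefficients. One could alternatively route items~\ref{2b}--\ref{2c} through the associated $S$-fraction, whose convergents are the $f^{[n/m]}$ and form a monotone two-sided approximation, but the error formula delivers the coefficient statements most directly.
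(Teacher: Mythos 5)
Your proof is correct, but it follows a genuinely more self-contained route than the paper's, whose own proof is largely a citation: the authors prove (\ref{2a}) from the defining congruence, refer to Baker--Graves-Morris (p.~220) for item~\ref{poles} and (p.~212) for (\ref{2c}), and obtain the strictness in (\ref{2b}) by a uniqueness argument --- if $f^{[n/m]}_{n+m+1}=f_{n+m+1}$ then $(P_n,Q_m)$ would also solve the Pad\'e problem one order higher, contradicting the exact-degree statement of Theorem~\bref{pade-t}. You instead construct the representing measure $\gamma$ from the Hankel positivity (the classical Stieltjes moment theorem), read the homogeneous block (\ref{mm}) as orthogonality of $\hat Q_m(u)=u^mQ_m(1/u)$ with respect to $u^{n-m}\,d\gamma$, and get item~\ref{poles} from the zero-location theorem for orthogonal polynomials; this is exactly the mechanism behind the reference the paper cites, made explicit, and your bookkeeping (row $j$ of (\ref{mm}) giving $\int u^{n-m+j-1}\hat Q_m\,d\gamma=0$, the hypothesis $n\geq m$ keeping the modified measure positive, $q_0=1$ and $q_m\neq 0$ making reciprocation a bijection between the zero sets) is all in order. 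For (\ref{2b})--(\ref{2c}) your route through the error representation is sound, and it delivers the strict inequality at $k=n+m+1$ more transparently than the paper's degree argument, since the leading coefficient of the error is $\int u^{n-m}\hat Q_m^2\,d\gamma>0$ (with $c_{n,m}=1$ in the normalization $f(x)=\int x\,d\gamma(u)/(1-xu)$). What your version buys is that the only remaining black boxes are the moment theorem, the error formula, and the non-negativity of the residues and polynomial part of $P_n/Q_m$; the last two are themselves short consequences of the orthogonality you already establish (divide $Q_m(x)/(1-xu)$ by $1-xu$ and apply orthogonality twice for the error formula; Gauss-quadrature positivity for the residues), so with one more paragraph your argument would be essentially self-contained where the paper's is not.
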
 
\begin{proof}
The assertion (\ref{poles}) is proved in \cite{BG}, p. 220. Note that
the authors use the function $f(-z)$. 
Assertion (\ref{2a}) is a consequence of the Pad{\'e} approximation
definition. Assertion (\ref{2c}) is proved in \cite{BG}, p. 212. Note
that the authors have shown that $0\leq f_k^{[n/m]} \leq f_k$. 
Suppose now that $f_{n+m+1}^{[n/m]} = f_{n+m+1}$. From theorem
{\bf \ref{pade-t}}, we would have $\deg P_n + \deg Q_m = n+m+1$ and this is
not the case. We thus have $f_{n+m+1}^{[n/m]} < f_{n+m+1}$.
\qed%
\end{proof}
We thus deduce
\begin{corollary}\label{cnm}
Let $m\leq n$. There are polynomials $P_n \in \RR_n[u]$, 
$Q_m \in \RR_m[u]$ and $F_{n,m} \in \RR[v]$  such that 
$$
Q_m(u)v-P_n(u) = v^{n+m+1} F_{n,m}(v),
$$
where $F_{n,m}(0)=1$.
Furthermore, we have $F_{n,m}(v)>0$ when $v \in [0,1]$, 
$\deg P_n=n$, $\deg Q_m = m$ and $Q_m(u)>0$ for $u \in [0,1]$. 
\end{corollary}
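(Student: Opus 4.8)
The plan is to read Corollary~{\bf\ref{cnm}} as nothing but the translation, through the algebraic change of variable $4u = v(v-3)^2$, of the standard convergence properties of the Pad{\'e} approximants of the Stieltjes series $\phi$ established just above. Recall that $\phi(u) = \sum_{n\geq 1}\phi_n u^n$ is a Stieltjes series, that $u = u(v) := \Frac{v(v-3)^2}{4}$ is a polynomial inverse of $v = \phi(u)$ with $u(v) = \Frac 94 v + {\cal O}(v^2)$, and that $v\mapsto u(v)$ increases from $[0,1]$ onto $[0,1]$ since $u'(v) = \Frac 34(v-1)(v-3) > 0$ on $(0,1)$. Moreover $\phi$, whose only branch point on $[0,+\infty[$ sits at $u=1$, has radius of convergence $1$; since its Taylor coefficients $\phi_n$ are nonnegative, it extends continuously up to $u=1$ with $\sum_{n\geq 1}\phi_n = \phi(1) = 1 < +\infty$.

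First I would let $(P_n,Q_m)$ be the Pad{\'e} approximant of order $(n,m)$ of $\phi$ given by theorem~{\bf\ref{pade-t}}: $\deg P_n = n$, $\deg Q_m = m$, $Q_m(0) = 1$ and $P_n - \phi Q_m = 0 \mod{u^{n+m+1}}$. Assertion~{\bf\ref{poles}} of theorem~{\bf\ref{bmt}} places all $m$ roots of $Q_m$ in $]1,+\infty[$, so $Q_m$ has no zero in $[0,1]$, and as $Q_m(0) = 1 > 0$ this already gives $Q_m(u) > 0$ on $[0,1]$. Next, substituting $u = u(v)$ and using $\phi(u(v)) = v$, the quantity $v\,Q_m(u(v)) - P_n(u(v)) = \phi(u(v))Q_m(u(v)) - P_n(u(v))$ is a genuine polynomial in $v$ (a composition of polynomials); since $\phi Q_m - P_n$ is a power series in $u$ starting at order $n+m+1$ and $u(v)$ vanishes to order $1$ at $v=0$, this polynomial vanishes to order $n+m+1$ at $v=0$, hence equals $v^{n+m+1}\tilde F_{n,m}(v)$ for some $\tilde F_{n,m}\in\RR[v]$. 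Its constant term is the coefficient of $u^{n+m+1}$ in $\phi Q_m - P_n$, namely $\phi_{n+m+1} - \phi_{n+m+1}^{[n/m]}$ (here $Q_m(0)=1$ is used), multiplied by $(9/4)^{n+m+1}$; it is strictly positive by assertion~{\bf\ref{2b}} of theorem~{\bf\ref{bmt}}.

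Then I would show $\tilde F_{n,m}>0$ on $[0,1]$. By assertions~{\bf\ref{2b}} and~{\bf\ref{2c}} of theorem~{\bf\ref{bmt}}, the remainder $\phi - P_n/Q_m = \sum_{k\geq n+m+1}(\phi_k - \phi_k^{[n/m]})u^k$ has nonnegative coefficients with a strictly positive one, so it is $\geq 0$ on $[0,1]$ and, evaluating, $>0$ on $(0,1]$ (the value at $u=1$ is a convergent series of nonnegative terms containing a positive one); multiplying by $Q_m>0$ and using $(\phi Q_m - P_n)(0)=0$, the function $u\mapsto (\phi Q_m - P_n)(u)/u^{n+m+1}$ is positive on all of $[0,1]$. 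Since $u(v)\in[0,1]$ and $u(v)/v = (v-3)^2/4 > 0$ for $v\in[0,1]$, we get
$$
\tilde F_{n,m}(v) = \Frac{(\phi Q_m - P_n)(u(v))}{u(v)^{n+m+1}}\left(\Frac{(v-3)^2}{4}\right)^{n+m+1} > 0,\qquad v\in[0,1].
$$
Finally I would rescale: replacing $(P_n,Q_m)$ by $(\lambda P_n,\lambda Q_m)$ with $\lambda = \tilde F_{n,m}(0)^{-1} > 0$ multiplies $\tilde F_{n,m}$ by $\lambda$, so $F_{n,m} := \lambda\tilde F_{n,m}$ satisfies $F_{n,m}(0) = 1$ while the degrees and both positivity statements are untouched.

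The step I expect to demand the most care is the behaviour at the endpoint $u=1$ (equivalently $v=1$): proving $\tilde F_{n,m}(1)>0$ requires knowing that $\phi$ and the remainder series still converge and are well behaved there, which is exactly why I invoke the continuity of $\phi$ up to $u=1$ together with nonnegativity of the $\phi_n$. Away from that endpoint the whole argument is a direct term-by-term reading of theorem~{\bf\ref{bmt}}; the only other genuine care needed is the bookkeeping that keeps the Pad{\'e} variable $u = (T_6+2)/4$ and the target variable $v = T_2 + 2 = \phi(u)$ distinct, and the check that $(\phi Q_m - P_n)(u(v))$ really is a polynomial in $v$ vanishing to order exactly $n+m+1$.
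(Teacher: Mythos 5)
Your proposal is correct and follows essentially the same route as the paper: Pad\'e approximation of the Stieltjes series $\phi$, positivity of $Q_m$ on $[0,1]$ from the location of its poles, positivity of the remainder coefficients from Theorem~{\bf\ref{bmt}}, the substitution $4u=v(v-3)^2$ to recognize $vQ_m(u)-P_n(u)$ as $v^{n+m+1}F_{n,m}(v)$, and a final rescaling to normalize $F_{n,m}(0)=1$. You are in fact slightly more careful than the printed argument about the behaviour at the endpoint $u=1$ and about the normalization step.
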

\begin{proof}
$\phi$ is a Stieltjes series and because 
$\Frac{\phi_{n+1}}{\phi_n} \equi_{n \to \infty} 1 -\Frac 3{2n}$, we
deduce that its radius of convergence $R$ is 1 and that 
$\sum_{n \geq 1} \phi_n = \phi(1) = 1$. 
Let $\phi^{[n/m]} = {P_n}/{Q_m}$ be the Pad{\'e} approximant of 
$\phi$, we deduce that 
$$
\phi(u) - \phi^{[n/m]}(u) = \sum_{k\geq n+m+1} (\phi_k - \phi_k^{[n/m]})u^k
= u^{n+m+1} \psi_{n,m}(u), \ 0 \le u \leq 1. 
$$ 
We have $\psi_{n,m}(u)>0$ for $u \in [0,1])$, from theorem
{\bf\ref{bmt}}, (\ref{2b}).  
From $Q_m(0)=1$ and theorem {\bf \ref{bmt}}, (\ref{poles}) we get
$Q_m(u)>0$ for $u \in [0,1]$, and 
$$
vQ_m(u)-P_n(u)=u^{n+m+1} \psi_{n,m}(u) Q_m(u) >0. 
$$
On the other hand, as $4 u = v(v-3)^2 \equi_{v\to 0} 9v$, we deduce that
$vQ_m(u)-P_n(u)$ is a polynomial in $v$ with $0$ as root of order 
$n+m+1$. We deduce that 
$$ 
vQ_m(u)-P_n(u) = v^{n+m+1} F_{n,m}(v),$$ 
where $F_{n,m}$ is a polynomial. 
\qed%
\end{proof}
We deduce 
\begin{proposition}\label{cn}
There exists a family $C_n$ in $\vect (W_0, \ldots, W_n)$, 
such that 
$$
C_n = t^{2n+1} F_n, \ F_n(0)= 1.
$$
Furthermore, $\deg C_n = 2n+2\pent n2+1$ and $F_n(t)>0$ for $t \in [-2,2]$. 
\end{proposition}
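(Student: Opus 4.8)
The plan is to transport Corollary \ref{cnm} from the variable $u$ to the curve parameter $t$ through the substitution of Section \ref{t2t6}. Set $v := T_2 + 2 = t^2$ and $u := \Frac{T_6+2}4$. From $T_3 = T_1(T_2-1)$ and $T_6 = T_3^2-2$ one gets $T_6+2 = (T_2+2)(T_2-1)^2$, i.e. the polynomial identity $4u = v(v-3)^2$, which is exactly the algebraic equation satisfied by the Stieltjes series $\phi$, with $v = \phi(u)$ on the branch $\phi\colon[0,1]\to[0,1]$ (a strictly increasing bijection, $\phi(0)=0$, $\phi(1)=1$, whose inverse is $v\mapsto v(v-3)^2/4$). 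I would also record, from the proof of Lemma \ref{EE}, that $W_{2k} = V_{6k+1} = T_1\,q_k(T_6)$ and $W_{2k+1} = V_{6k+3} = T_1 T_2\,q_k(T_6)$ with $q_k = 1+\sum_{j=1}^k T_j$ of degree $k$; hence $\vect(W_0,\dots,W_n)$ equals $T_1\,\RR_p[T_6]\oplus T_1 T_2\,\RR_{p-1}[T_6]$ when $n=2p$ and $T_1\,\RR_p[T_6]\oplus T_1 T_2\,\RR_{p}[T_6]$ when $n=2p+1$, where $\RR_d[x]$ is the space of polynomials of degree $\le d$.

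Now fix $n\ge1$ (the case $n=0$ being trivial, $C_0 = W_0 = t$, $F_0=1$), put $m' = \pent{n-1}2$ and $n' = n-1-m'$, so $0\le m'\le n'$ and $n'+m'+1 = n$, and apply Corollary \ref{cnm} to the pair $(n',m')$. This gives $P_{n'}\in\RR_{n'}[u]$, $Q_{m'}\in\RR_{m'}[u]$ and a polynomial $F_{n',m'}$ with $Q_{m'}(u)v - P_{n'}(u) = v^{\,n}F_{n',m'}(v)$ identically in $v$, with $\deg P_{n'}=n'$, $\deg Q_{m'}=m'$, $Q_{m'}>0$ and $F_{n',m'}>0$ on $[0,1]$, and (read off from the proof of that corollary) $\phi(u)Q_{m'}(u) - P_{n'}(u) = u^{\,n}\psi(u)Q_{m'}(u)$ with $\psi := \psi_{n',m'}>0$ on $[0,1]$. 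Writing $\hat Q(x) = Q_{m'}\!\left(\Frac{x+2}4\right)$ and $\hat P(x) = P_{n'}\!\left(\Frac{x+2}4\right)$ (of degrees $m'$ and $n'$), substituting $x = T_6$ and multiplying by $T_1$ turns the identity into
$$ T_1 T_2\,\hat Q(T_6) + T_1\bigl(2\hat Q(T_6) - \hat P(T_6)\bigr) = t^{\,2n+1}\,F_{n',m'}(t^2). $$
I would set $C_n$ equal to the left-hand side divided by $F_{n',m'}(0)>0$, and $F_n(t) = F_{n',m'}(t^2)/F_{n',m'}(0)$, so $C_n = t^{2n+1}F_n$ and $F_n(0)=1$. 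By the description of $\vect(W_0,\dots,W_n)$ above, $C_n$ lies in that space; it is odd; and $\deg C_n = 2n+2\pent n2+1$ because for $n$ even the summand $T_1(2\hat Q-\hat P)(T_6)$, of degree $6n'+1$, dominates ($\deg\hat P = n'>m'=\deg\hat Q$), while for $n$ odd the summand $T_1 T_2\hat Q(T_6)$, of degree $6m'+3$, dominates.

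The only step requiring genuine work is $F_n>0$ on $[-2,2]$, equivalently $F_{n',m'}(v)>0$ for $v=t^2\in[0,4]$, whereas Corollary \ref{cnm} only delivers positivity on $[0,1]$. First I would check that $g(v):=v(v-3)^2/4$ maps $[0,4]$ into $[0,1]$: $g' = \Frac34(v-1)(v-3)$, so on $[0,4]$ it rises from $g(0)=0$ to a local maximum $g(1)=1$, drops to $g(3)=0$, and rises again to $g(4)=1$. Thus $u:=g(v)\in[0,1]$ for every $v\in[0,4]$, so $Q_{m'}(u)>0$, $\psi(u)>0$, $\phi(u)\in[0,1]$. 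For $v\in(0,4]$ the polynomial identity then reads
$$ v^{\,n}F_{n',m'}(v) = vQ_{m'}(u) - P_{n'}(u) = \bigl(v-\phi(u)\bigr)Q_{m'}(u) + u^{\,n}\psi(u)Q_{m'}(u) : $$
if $v\in(0,1]$ then $\phi(u)=v$ (inverse branch), killing the first term and leaving the second $>0$; if $v\in(1,4]$ then $v>1\ge\phi(u)$, making the first term $>0$ and the second $\ge0$. Dividing by $v^{\,n}>0$ and letting $v\to0$ for the endpoint gives $F_{n',m'}>0$ on all of $[0,4]$, hence $F_n>0$ on $[-2,2]$. This branch argument — positivity of the Pad\'e remainder persisting outside the disk of convergence because $Q_{m'}$ stays positive there and $v$ overtakes $\phi(u)$ — is the crux; everything else is bookkeeping with the $V_k$--$T_k$ identities of Sections \ref{cheby} and \ref{t2t6}.
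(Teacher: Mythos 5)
Your proof is correct and follows essentially the same route as the paper: apply Corollary \ref{cnm} with the same index pair, pull it back through $u=(T_6+2)/4$, $v=T_2+2=t^2$, multiply by $t=T_1$ to land in $\vect(W_0,\ldots,W_n)$, and extend positivity from $v\in[0,1]$ to $v\in[0,4]$ using $Q_{m'}>0$ together with $v\ge\phi(u)$. The paper phrases that last step as monotonicity in $v$ (picking $t_1\in\,]0,1]$ with $u(t_1)=u(t)$ and comparing $vQ-P$ with $v_1Q-P$), which is exactly the decomposition $(v-\phi(u))Q_{m'}(u)+(\phi(u)Q_{m'}(u)-P_{n'}(u))$ you write out.
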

\begin{proof}
Let us consider 
$$
C_{k,l} (t) = v Q_l(u) - P_k(u)= v^{k+l+1}F_{k,l}(v) 
$$
given by corollary {\bf \ref{cnm}}. Note that $Q_l(u)>0$ for $u \in [0,1]$.
\pn
\tiret
If $t \in [-1,1]$, we have $u,v \in [0,1]$ and the announced result by
corollary {\bf \ref{cnm}}. 
\pn
\tiret
We have $u([1,2])= u([-2,-1])=u([0,1])=[0,1]$. Let 
$\abs t \in [1,2]$. There exists $t_1 \in ]0,1]$, such that 
$u=u(t)=u(t_1)=u_1$ and we have $v=v(t)=t^2\geq t_1^2=v(t_1)=v_1$. 
We deduce
\begin{eqnarray*}
C_{k,l}(t) = 
v Q_l(u) - P_k(u) &=& 
v Q_l(u_1) - P_k(u_1)
\\
&\geq& 
v_1 Q_l(u_1) - P_k(u_1)>0.
\end{eqnarray*}
In conclusion, for $t\in [-2,2]$, we have $F_{k,l}(t)>0$.
\pn
\tiret $Q_l(u) \in (T_2+2)\RR_l[T_6+2]$ and $P_k \in
\RR_k[T_6+2]$. We thus deduce that $C_{k,l} \in \RR[T_6] \oplus T_2 \RR[T_6]$.
Note that $\deg C_{k,l} = \max(6k+2,6l)$.  
\pn
\tiret
If $n=2k+1$, let $C_n = t\cdot C_{k,k}$. 
If $n=2k$, let $C_n = t\cdot C_{k,k-1}$. 
$C_n$ has degree $2n+2\pent n2+1$ and therefore $C_n \in
\vect(W_0,\ldots,W_n)$.   
\qed%
\end{proof}
\begin{remark}
We have proved the existence of $C_n$. This is an upper-triangular basis of $E$
with respect to the $W_i$. It is unique and it can be computed by
simple LU-decomposition of the matrix whose lines are the $W_i$.
\end{remark}
\section{Conclusion}
We have shown in this paper the existence of plane polynomial curves
of degree $(3,N+ 2\pent N4+1)$ having the required properties. We
think that they are of minimal lexicographic degrees (it is true for
$N=3,5,7,9$). This question is related to the following
question: where are the real zeros of polynomials in 
$\vect(V_k, k \not = 2 \mod 3)$? 
We guess that such polynomials cannot have too many zeroes in
$[-1,1]$. It would give a lower bound for the degrees
of the torus knots approximation by polynomial curves.  
\pn We have not given explicit formulas for our polynomials.
We have just shown that they can be found by solving some
explicit linear system. In a near future, we hope we will be able to
give explicit function of the degree $N$.

\end{document}